\title{Contact geometry in infinite dimensions}
\author[F.A.K. Sanders]{\orcidlink{0009-0007-8603-8753}
Fraser A.K. Sanders}
\address{Mr F.A.K. Sanders, The Alan Turing Building, The University of Manchester, Manchester, M13 9PL}
\email{fraser.sanders@manchester.ac.uk}
\keywords{Contact geometry, symplectic geometry, Hamiltonian mechanics, Banach spaces, Fréchet spaces, infinite-dimensional geometry}
\subjclass[2020]{53D10, 37J55, 70H05, 46T05}
\begin{document}

\begin{abstract}
    We generalise the theories of cosymplectic, contact, and cocontact manifolds to the infinite-dimensional setting and calculate model examples of time-dependent and dissipative Hamiltonian systems.
\end{abstract}

\maketitle
\tableofcontents
\section{Introduction}
Symplectic geometry and the theory of Hamiltonian mechanics can be readily extended to infinite dimensions, with some very useful results. See \cite{MarsdenWeinstein83, ArnoldKhesin21} for only a few examples. Since these Hamiltonian systems can only model physical systems where the Hamiltonian is conserved, it is natural to want to extend this theory to physical systems in which the Hamiltonian, and other properties, are instead dissipated, or diffused. 

In finite dimensions, contact geometry is a well-studied companion to symplectic geometry, often for the fact that one can define Hamiltonian vector fields which take into account some notion of dissipation or diffusion, particularly in the case of thermodynamics (see \cite{Bravetti17} for a survey on this, and \cite{Mrugala78} for a seminal paper in the field of geometrothermodynamics). Hence, it is the perfect place to start.

However, the definition of a contact form makes explicit use of the dimension of the manifold: on a $2n+1$ dimensional manifold, $\eta$ is a contact form if $\eta \wedge (d\eta)^n$ is a volume form. This definition is unsuitable if we wish to extend the concept of contact Hamiltonian mechanics into infinite dimensions. The aim of this paper is to redefine contact geometry, without making explicit the dimension of the underlying manifold, and then to apply this new definition to a range of infinite-dimensional manifolds and contact Hamiltonian systems. Physically, these correspond to systems that must necessarily be modelled by infinite-dimensional spaces, and have some notion of irreversibility, diffusion, or dissipation.

This paper can be summarised by the following ``commutative diagram''.
\begin{figure*}[h!]
    \centering
    \begin{tikzcd}
        \textrm{Symplectic Geometry} \arrow[r, "\infty"] \arrow[dd, "\textrm{non-conservative}"'] & \infty\textrm{-dim Symplectic Geometry} \arrow[dd, "\textrm{non-conservative}"] \\ \\
        \textrm{Contact Geometry} \arrow[r, "\infty"] & \infty\textrm{-dim Contact Geometry}
    \end{tikzcd}
\end{figure*}

The outline of the paper is as follows: this paper starts with a background section in section 2. It can be freely skipped by any reader with knowledge of geometry in infinite dimensions, or referred back to to fix notation. Section 3 then defines (or redefines) contact distributions in infinite dimensions, and explores which properties are preserved from the finite-dimensional case, and which properties arise in the new setting. In section 4, we build examples of infinite-dimensional contact manifolds, and prove that some constructions from finite dimensions still work in the infinite-dimensional setting, in particular
contactisations and hypersurfaces of contact type. Section 5 starts to generalise the theory of contact Hamiltonian mechanics to the infinite-dimensional setting. This section contains two main examples. The first is a toy example on the space of 1-forms on a compact, orientable, Riemannian 2-manifold. This example is of limited physical interest, but may be a useful example for studying the action of diffeomorphism groups. The second example is the damped harmonic oscillator, whose physical interest should hopefully be clear. The final section, section 6, takes the work done in the previous sections, and extends it further to define cosymplectic and cocontact manifolds. We then use these structures to define Hamiltonians for a wave equation with a source term, and a damped wave equation with a source term, respectively.

\subsection*{Acknowledgements}
The author would like to express his gratitude to Prof. Kasia Rejzner for her encouragement at the very beginning of this project, especially for pointing me in the direction of \cite{KrieglMichor97}. The author is also grateful to the rest of the Mathematical Physics and Quantum Information research group at the University of York, whose helpful questions at their seminar guided much of section 4 of this paper.

The author would like to thank Dr Robert Cardona for his helpful discussion over lunches at the XXXIInd International Fall Workshop on Geometry and Physics, as well as the organising committee of the XXXIIIrd edition of the workshop for the opportunity to present some of the following results. 

Thanks go to Jiacheng Tang and Matthew Antrobus for their help in understanding the subtleties of topological vector spaces, as well as Andreas Swerdlow for discussions on differential geometry.

Finally, the author would like to thank Dr James Montaldi for many hours of fruitful discussion, and for bringing to his attention the exact combination of papers that inspired the idea central to this paper.
\section{Background}
The following is a very brief introduction to differential geometry in infinite dimensions, specifically symplectic geometry in infinite dimensions.

\subsection{Topological vector spaces}
In some sense the most primitive setting for differential geometry in infinite dimensions is a \emph{topological vector space}. All vector spaces in this paper will be over $\R$, which we equip with the Euclidean topology.

\begin{definition}[TVS and lcTVS]
    A \emph{topological vector space} (TVS) is a vector space $E$, equipped with a topology such that the maps 
    \begin{align*}
        +\colon E \times E \rightarrow E, && \times \colon \R \times E \rightarrow E
    \end{align*}
    are continuous.

    A TVS is futher \emph{locally convex} (an lcTVS) if 0 has a local basis consisting of convex sets.
    
    The word \emph{toplinear} is sometimes used to describe a map between two TVS's that is both continuous and linear. Such maps are the morphisms in the category of TVS's (or any full subcategory thereof). 
\end{definition}
We will assume all TVS's to be Hausdorff.

The need for topological vector spaces for differential geometry can be summed up briefly by considering expressions such as
\[\lim_{\delta \rightarrow 0} \frac{f(x+\delta)-f(x)}{\delta},\]
which requires both a topology and a vector space structure to make sense.
However, as always in topology, for many purposes we must make extra assumptions on the topology to avoid pathological behaviour. In particular, the chain rule does not hold in  locally convex spaces generally.

We will denote the \emph{algebraic dual}, that is, the space of linear maps from a TVS $E$, into $\R$ as $E^*$. We will denote the space of $continous$ linear maps $E\rightarrow \R$ as $E'$.

For the purposes of this paper we concentrate on \emph{convenient spaces}, \emph{Fréchet spaces}, and \emph{Banach spaces}.
\subsubsection{Smooth curves and convenient spaces}
We paraphrase \cite[\textsection 1-2]{KrieglMichor97}.
\begin{definition}[smooth maps]
    Let $E$ be a lcTVS and $c \colon \R \rightarrow E$ be a continuous (not necessarily linear) function. Then we call $c$ a $C^0$ path. We call $c$ \emph{differentiable} or $C^1$ if the \emph{derivative}
    \[ c'(t):=\lim_{\delta\rightarrow 0} \frac{c(t+\delta)-c(t)}{\delta} \]
    exists and is a continuous function of $t$. Further, $c$ is $C^n$ if the $n$-th iterated derivative exists and is continuous, and $\Cinf$ if it is $C^n$ for all $n$. We write $\Cinf(\R, E)$ for the space of smooth paths in $E$.

    Let $E$ and $F$ be convenient vector spaces and $U\subset E$ be open. Then a map $f\colon U\rightarrow F$ is called a \emph{smooth map} if $f\circ c$ is a smooth curve in $F$ for all smooth curves $c\in \Cinf(\R, E)$.
\end{definition}

The following definition is only one of 7 equivalent conditions given by the authors in \cite{KrieglMichor97} but it is the most intuitive for differential geometry.
\begin{definition}[Convenient vector space]
    An lcTVS, $E$,  is \emph{convenient} (sometimes called \emph{Mackey-complete} or \emph{$c^\infty$-complete}) if for any $c_1 \in \Cinf(\R, E)$ there exists some $c_2 \in \Cinf(\R,E)$ such that $c_2' = c_1$.

    Equivalently, if $c \in C^0(\R, E)$ and for all $\ell \in E'$, $\ell \circ c\colon \R \rightarrow \R$ is smooth, then $c$ is smooth.
\end{definition}

Importantly, the strong dual of a convenient vector space is also a convenient vector space. In fact the space $\Cinf(E,F)$ is a convenient vector space for all pairs, $E,F$, of convenient vector spaces.

\subsubsection{Fréchet spaces}
\begin{definition}[seminorm]
    A \emph{seminorm} on a vector space $E$ is a function $p\colon E \rightarrow \R$ such that 
    \begin{itemize}
        \item $p(x)\geq 0$ for all $x \in E$,
        \item $p(x+y) \leq p(x)+ p(y)$ for all $x,y \in E$,
        \item $p(\lambda x) = |\lambda|p(x)$ for all $\lambda \in \R$ and $x \in E$.
    \end{itemize}
    Note that a seminorm is only one assumption away from a norm --- non-zero vectors may have seminorm zero.
\end{definition}

Following \cite{Dodson15}, given a family of seminorms, $\Gamma = \braces{p_i}_{i \in I}$, on a vector space $E$, we can fix a topology $\tau_\Gamma$ determined by the family 
\[ B_\Gamma := \{S(\Delta, \varepsilon) | \ \varepsilon>0, \ \Delta \subset \Gamma, \text{$\Delta$ finite}\} \]
where 
\[ S(\Delta, \varepsilon) := \braces{x \in E | \ p(x)<\varepsilon, \forall p \in \Delta } \]

Then we have the following statements \cite[prop. 2.1.3]{Dodson15}:
\begin{itemize}
    \item $\tau_\Gamma$ makes $E$ into a topological vector space, with the finest topology making all seminorms continuous.
    \item A TVS is locally convex if and only if the topology coincides with $\tau_\Gamma$ for some system of seminorms.
    \item $(E, \tau_\Gamma)$ is Hausdorff iff 
        \[ x = 0 \Leftrightarrow \left(p(x) = 0, \, \forall p \in \Gamma\right). \]
    \item If $(E, \tau_\Gamma)$ is Hausdorff, then it is also metrisable exactly whenever $\Gamma$ is countable.
    \item Let $(x_n)_{n\in \mathbb{N}}$ be a sequence in $(E, \tau_\Gamma)$. Then
    \[ (x_n \to x) \Leftrightarrow (\forall p \in \Gamma, \, p(x_n - x) \rightarrow 0). \]
    \item We call a sequence $(x_n)_{n\in \mathbb{N}}$ \emph{Cauchy} if 
    \[\lim_{n.m \to \infty} p(x_n-x_m) \to 0, \ \forall p \in \Gamma \]
    and we call $(E, \tau_\Gamma)$ \emph{complete} if all Cauchy sequences converge.
\end{itemize}

\begin{definition}[Fréchet space]
    A \emph{Fréchet space} is a locally convex, Hausdorff, metrisable, and complete topological vector space.
\end{definition}
For this paper the most important Fréchet spaces are spaces of sections of some given vector bundle, $\Gamma(V \rightarrow M)$, where $M$ is a compact finite-dimensional Riemannian manifold. This includes $\Cinf(M)$, $\vect(M)$, and $\Omega^k(M)$. The seminorms in these examples are given by 
\[\|f\|_n = \sum_{i=0}^n \sup_{x\in M} |\nabla^i f|\]
for some choice of covariant derivative, $\nabla$.

Unlike convenient vector spaces, the dual space of a Fréchet space is not necessarily a Fréchet space. In fact, for a Fréchet space $E$, $E'$ is Fréchet iff $E$ is Banach.

\subsubsection{Banach spaces}
\begin{definition}[Banach space]
    Let $E$ be a vector space and $\|.\|\colon E \rightarrow \R$ be a norm (that is, a seminorm such that $\|x\| = 0 \Leftrightarrow x=0$). Then we can induce a metric, and hence a topology, on $E$ by defining $d(x,y):= \|x-y\|$. If this metric is complete, then $(E, \|.\|)$ is a \emph{Banach space.}
\end{definition}

The dual of a Banach space is once again a Banach space, as well as the space $L(E,F)$ of continuous linear maps between Banach spaces $E$ and $F$. If a space $E$ is isomorphic to its double-dual $E''$, then we call $E$ \emph{reflexive}, and if $E\cong E'$ then $E$ is \emph{self-dual}. One important class of self-dual Banach spaces is Hilbert spaces.

\subsection{Differential geometry in infinite dimensions}
\begin{definition}[Manifold]
    Fix a TVS $E$ and a topological space $M$. Let $U\subset M$ be open. A \emph{chart} $(\varphi, U)$ is a homeomorphism $\varphi: U \rightarrow V \subset E$, for an open set $V$. Two charts $(\varphi_\alpha, U_\alpha)$, $(\varphi_\beta, U_\beta)$ are \emph{compatible} if the map $\varphi_\alpha \circ \varphi_\beta^{-1}$ is smooth where it is defined.

    A family $(\varphi_\alpha, U_\alpha)_{\alpha \in A}$ of charts is called an \emph{atlas} if $\braces{U_\alpha}_{\alpha \in A}$ is a cover for $M$ and each pair of charts is compatible. As in finite-dimensional manifolds, we may take \emph{maximal atlases} and then we call the pair $(M, \mathcal{A})$ consisting of a topological space and a maximal atlas a \emph{manifold modelled by $E$}.
\end{definition}

Let $M$ be modelled by a TVS $E$. If $E=\R^n$ for some finite $n$ then $M$ is a \emph{finite dimensional manifold} and the definition coincides with the more familiar one. If $E$ is convenient then $M$ is a \emph{convenient manifold}, and so on. As previously mentioned \cite{KrieglMichor97} is a comprehensive source on the geometry of convenient manifolds and all results hold for Fréchet and Banach manifolds, but \cite{Dodson15} and \cite{Lang72} are good sources for the geometry of Fréchet manifolds and Banach manifolds respectively. The former specifically uses the approach of identifying each Fréchet space with an inverse limit in the category of Banach spaces.

\begin{definition}
    Let $M$, $N$ be manifolds modelled by $E$, a TVS. A continuous path $c\colon \R \rightarrow M$ is \emph{smooth} if for any chart $\varphi$, $\varphi \circ c$ is a smooth path in $E$, wherever it is defined. The space of smooth paths on a manifold $M$ is denoted $\Cinf(\R, M)$.
    
    Let $F\colon M\rightarrow N$ be a continuous map. $F$ is \emph{smooth} if for all smooth paths $c \in \Cinf(\R, M)$, $F\circ c$ is a smooth path in $N$.
\end{definition}

\begin{definition}[Kinematic tangent bundle]
    Let $c_1, c_2 \in \Cinf(\R, M)$. Define an equivalence relation $\sim$ by
    \[c_1 \sim c_2 \Leftrightarrow c_1(0) = c_2(0), \ \text{and} \ (\varphi \circ c_1)'(0) = (\varphi \circ c_2)'(0).\]

    Define the set $TM = \Cinf(\R, M)/\sim$, and the map $\pi\colon TM \rightarrow M$ by $\pi([c]) = c(0)$. By \emph{consistent transfer of linear structure} \cite[prop. 2.3]{JMLee09}, the sets $\pi^{-1}(x)=: T_xM$ are each isomorphic to $E$ and $\pi\colon TM \rightarrow M$ is a vector bundle over $M$. In particular, $TM$ is a manifold modelled by $E\times E$ which we call the \emph{kinematic tangent bundle}. Refer to \cite[\textsection 28]{KrieglMichor97} for details on the local charts. If $v \in T_xM$ then we call $x$ the \emph{foot-point} of $v$.
\end{definition}

With the above definitions, tangent maps and the tangent functor behave as in finite dimensions.

\begin{definition}[Kinematic cotangent bundle]
    We can also construct the kinematic cotangent bundle in the same manner as above, but applying the dual functor to the model space $E'$, so that $T'M$ is modelled by $E\times E'$.
\end{definition}

Note that although for a Fréchet space $E$, $E'$ is no longer Fréchet, it is still convenient, so as long as we consider the larger category of convenient manifolds, it still makes sense to consider the cotangent bundle of a Fréchet manifold.

\begin{definition}[Kinematic vector fields and 1-forms]
    A \emph{kinematic vector field} is a section of $TM$. The set of kinematic vector fields is denoted $\vect(M)$. For this paper we will often consider $\vect(M)$ as a module over $\Cinf(M):= \Cinf(M,\R)$.

    Likewise, a \emph{kinematic 1-form} is a section of $T'M$ . We denote this space $\Omega^1(M)$.
\end{definition}

Note that the word ``kinematic'' has appeared in our definitions. This is because, unlike in Banach manifolds, the space of sections of $TM$ is not the same as the space of derivations of $\Cinf(M)$, the latter are called \emph{operational} vector fields, but they are not relevant for this work. Note that the Lie bracket $[-,-]$ is defined on operational vector fields, but it is a theorem in \cite{KrieglMichor97} that $\vect(M)$ is a Lie sub-algebra of this algebra.

\begin{definition}[Differential $k$-forms]
    A kinematic $k$-form is a section of the sub-bundle
    \[ L^k_{\mathrm{alt}}(TM, M\times \R) \subset \Cinf(TM\times_M ... \times_M TM, M).\]
    In particular for $M=E$, a vector space, a $k$-form is a map
    \[\alpha\colon E \times \bigwedge^k E \rightarrow \R\]
    which is smooth in the first copy of $E$ and linear in $\bigwedge^k E$. In other words, it is a map
    $\alpha\colon E \times E^k \rightarrow \R$ which is smooth in the first copy of $E$, and continuous, linear, and alternating in the other $k$ copies of $E$. $k$-forms on more general manifolds modelled by a convenient space $E$ can then be ``stiched together'' from an atlas as one would expect from the finite-dimensional setting. 
\end{definition}

This is, in fact, the only definition of $k$-forms for which many of the standard statements about $k$-forms generalise into infinite dimensions. This is the subject of \textsection 33 of \cite{KrieglMichor97}.

\subsubsection{Symplectic manifolds}
We briefly mention that symplectic geometry generalises readily into infinite dimensions.
\begin{definition}[Weakly symplectic vector space]
    Let $E$ be a TVS and $\Omega\colon E \times E \rightarrow \R$ be a continuous bilinear form.
    Define the map $\flat_\Omega\colon E \rightarrow E$ by
    \[\flat(v)(w) = \Omega(v,w).\]
    If $\ker \flat_\Omega = \braces{0}$ then we call $\Omega$ \emph{weakly non-degenerate}. If $\flat_\Omega$ is an isomorphism then we call $\Omega$ \emph{strongly non-degenerate}. Note that when the dimension of $E$ is finite, the concepts of weak and strong non-degeneracy coincide.

    If in addition, $\Omega$ is antisymmetric then we call the pair $(E, \Omega)$ a \emph{weakly (strongly) symplectic vector space}.
\end{definition}

Note that if $(E,\Omega)$ is strongly non-degenerate then $E$ must be self-dual. Hence if we restricted ourselves to strong non-degeneracy, even spaces such as $\ell^p$ would be beyond our scope (unless $p=2$).

\begin{definition}[Weakly symplectic manifold]
    Let $M$ be a manifold modelled by some TVS $E$, and let $\omega \in \Omega^2(M)$. We call $\omega$ weakly non-degenerate if the map $\flat \colon TM \rightarrow T'M$
    \[ \flat\colon v \rightarrow \iota_v \omega \]
    is an injective bundle-morphism. Equivalently, $(T_xM, \omega_x)$ is a weakly symplectic vector space for each $x\in M$. If $\flat$ is in addition an isomorphism then we call $\omega$ strongly non-degenerate.

    We may also consider $\flat$ as a map $\vect(M) \rightarrow \Omega^1(M)$, with $\flat(X):= \iota_X \omega$.

    The pair $(M,\omega)$ is then a \emph{weakly (strongly) symplectic manifold} if 
    \begin{itemize}
        \item $\omega$ is closed, i.e. $d\omega =0$,
        \item and $\omega$ is weakly (strongly) non-degenerate.
    \end{itemize}
\end{definition}

\begin{definition}[Hamiltonian vector field]
    Let $(M,\omega)$ be a (weakly) symplectic manifold modelled by a TVS $E$ and let $H \in \Cinf(M)$. Then a vector field $X$ is called a \emph{Hamiltonian vector field of $H$} if 
    \[\flat(X) = dH.\]
    Since $\flat$ is at least injective, if a hamiltonian vector field of $H$ exists, then it is unique and we refer to \emph{the} Hamiltonian vector field of $H$, denoted $X_H$. 

    If $(M,\omega)$ is strongly symplectic then we can define $\sharp := \flat^{-1} \colon \Omega^1(M)\rightarrow \vect(M)$ and $X_H$ always exists, being given by $X_H = \sharp(dH)$. 
\end{definition}
\section{Infinite-dimensional contact manifolds}
The standard definition of a (strict, co-orientable) contact manifold is the following:
\begin{definition}\label{def: finite contact}
    Let $M$ be a $2n+1$-dimensional manifold and $\eta \in \Omega^1(M)$. The pair $(M,\eta)$ is a strict, co-orientable manifold if 
    \[\eta \wedge (d\eta)^n > 0\]
    is a volume form (a nowhere zero form of top degree).
\end{definition}

Of particular interest is the study of contact Hamiltonian systems, which generalises the Hamiltonian vector fields of symplectic geometry into the case where the Hamiltonian need not be conserved. This allows us to use contact forms to model physical systems that have dissipative or irreversible properties, such as in thermodynamics \cite{Bravetti17}. The model example is the damped harmonic oscillator \cite[\textsection 7.1]{deLeonIzquierdo24}.

If we wish to extend the study of dissipative Hamiltonian systems to infinite dimensions then we should first generalise contact geometry into infinite dimensions. The first problem is that the above definition makes explicit use of the dimension of the manifold, and if we try to use this naïvely, we quickly run into issues, since the concept of a differential ``$\infty$-form'' is not defined.

We hence intoduce a definition of contact geometry which does not make explicit use of the dimension of the manifold, and so allows us to generalise the concept of a contact manifold to the infinite-dimensional setting. In particular, we introduce ``weakly'' contact manifolds in order to study contact geometry on manifolds modelled by spaces which are not self-dual. We justify this new definition by comparing it both with infinite-dimensional symplectic geometry and with finite-dimensional contact geometry. From now on we assume all manifolds to be modelled by a convenient space, which includes manifolds modelled by Banach or Fréchet spaces.

\begin{theorem}\label{thm: inf dim contact forms}
    Let $M$ be an infinite-dimensional manifold, and $\eta\in \Omega^1(M)$ be a 1-form such that $d\eta$ is degenerate (has non-zero kernel). We define the \emph{horizontal} and \emph{vertical} distributions (subbundles of $TM$) as $\Hor:= \ker \eta$ and $\Ver:= \ker d \eta = \ker (v \mapsto d\eta(v,-))$ respectively. The following are equivalent:
    \begin{enumerate}
        \item The tangent bundle is equal to the Whitney sum of the horizontal and vertical tangent bundles; i.e. $TM = \Hor \oplus \Ver$
        \item $d\eta$ is weakly non-degenerate on $\Hor$, i.e. $d\eta|_\Hor$ has zero kernel. Equivalently, $(\Hor, d\eta|_\Hor)$ is a weakly symplectic vector bundle.
        \item The map \[\flat\colon \vect(M)\rightarrow \Omega^1(M)\]
        \[ \flat(X) = \iota_X d\eta + \eta(X)\eta\]
        is an injective homomorphism of $\Cinf(M)$-modules. Equivalently, $\flat: TM \rightarrow T'M$ is an injective bundle homomorphism.
    \end{enumerate}
\end{theorem}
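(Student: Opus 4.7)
The plan is to establish the three conditions are equivalent via the cycle $(1) \Rightarrow (2) \Rightarrow (3) \Rightarrow (1)$, working fibrewise throughout since each condition is pointwise in nature. For $(1) \Rightarrow (2)$, take $v \in \Hor_x$ with $d\eta(v,h) = 0$ for all $h \in \Hor_x$; for arbitrary $w \in T_xM$, use the decomposition $w = h + z$ from $(1)$ and the fact that $z \in \Ver_x = \ker d\eta$ to conclude $d\eta(v, w) = 0$, so $v \in \Hor_x \cap \Ver_x = \{0\}$. For $(2) \Rightarrow (3)$, the algebraic trick is to plug $v = X$ into $\flat(X) = 0$: skew-symmetry of $d\eta$ forces $\eta(X)^2 = 0$, so $X \in \Hor_x$; the equation then collapses to $d\eta(X, v) = 0$ for all $v$, and $(2)$ applied to $X$ gives $X = 0$.

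For $(3) \Rightarrow (1)$, triviality of the intersection is immediate: any $v \in \Hor_x \cap \Ver_x$ satisfies $\flat(v) = 0$. The more substantive step is to show that $\Ver$ is a line subbundle, so that a Reeb-type complement can be constructed. For nonzero $v \in \Ver_x$ the formula $\flat(v) = \eta(v)\eta$ combined with injectivity of $\flat$ forces $\eta_x \neq 0$ and $\eta|_{\Ver_x}$ to be injective, so $\dim \Ver_x \leq 1$. The standing hypothesis $\ker d\eta \neq 0$ upgrades this to equality. Picking a (local) section $R$ of $\Ver$ normalised by $\eta(R) = 1$, every $v \in T_xM$ splits as $v = (v - \eta(v)R) + \eta(v)R \in \Hor_x \oplus \Ver_x$.

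The main obstacle sits inside $(3) \Rightarrow (1)$: promoting the fibrewise decomposition to a genuine Whitney sum of smooth subbundles requires $\Ver$ to be a smooth line subbundle and the Reeb-type section $R$ to depend smoothly on the base point. In infinite dimensions this is not automatic even with constant fibre dimension, and I would handle it by trivialising in a chart and reading off a smooth $R$ from the smooth bundle map $\flat$ and the form $\eta$. A secondary subtlety is that $(3)$ is phrased both as a $\Cinf(M)$-module map and as a bundle homomorphism; fibrewise injectivity, which my argument delivers directly, implies the sectionwise version, while the converse follows from the purely tensorial form of $\flat$.
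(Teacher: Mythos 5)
Your proof is correct and follows the same cycle $(1)\Rightarrow(2)\Rightarrow(3)\Rightarrow(1)$ as the paper, with the first two legs essentially identical (in fact your $(1)\Rightarrow(2)$ is slightly more careful: the paper silently identifies $\ker(d\eta|_{\Hor})$ with $\ker d\eta \cap \Hor$, and your decomposition $w = h + z$ is exactly the justification that identification needs). The genuine divergence is in $(3)\Rightarrow(1)$: the paper gets $\Hor_p \cap \Ver_p = \{0\}$, observes $\codim \Hor_p = 1$ and $\dim \Ver_p \geq 1$, and then invokes its codimension lemma (Lemma \ref{codimension lemma}) to force $\dim \Ver_p = 1$ and obtain the direct sum; you instead extract $\dim \Ver_x \leq 1$ directly from $\flat(v) = \eta(v)\eta$ and write down the explicit splitting $v = (v - \eta(v)R) + \eta(v)R$ with a normalised Reeb-type vector. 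The two arguments have the same mathematical content, but yours is self-contained and produces the Reeb field as a by-product, while the paper's packages the linear algebra into a reusable lemma that it also needs elsewhere (e.g.\ in Theorem \ref{thm: level sets of contact type} and the cocontact theorem). Two further points in your favour: working fibrewise throughout means your $(3)\Rightarrow(1)$ does not need to extend a vector $v \in \Hor_p \cap \Ver_p$ to a global section before applying injectivity, a step the paper's section-level phrasing quietly relies on; and you are right to flag that smoothness of $\Ver$ as a line subbundle and of $R$ is not automatic in infinite dimensions --- the paper does not address this either, taking the subbundle structure as given in the theorem's hypotheses, so your remark identifies a gap in the statement's framing rather than one in your own argument.
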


Note that for finite-dimensional $M$, conditions (1-3) are equivalent to definition \ref{def: finite contact}.

\begin{proof}
    We prove $(1)\Rightarrow(2)\Rightarrow(3)\Rightarrow(1)$.
    
    $(1)\Rightarrow(2)$: Assume that $TM = \Hor \oplus \Ver$ and let $p\in M$. Then $\Hor_p + \Ver_p = T_pM$ and $\Hor_p \cap \Ver_p = \braces{0}$. Then 
    \[\ker d\eta_p|_{\Hor_p} = \ker d\eta_p \cap \Hor_p = \Ver_p \cap \Hor_p = \braces{0}\]
    i.e. $d\eta|_\Hor$ is weakly non-degenerate.

    $(2)\Rightarrow(3)$: Assume that $d\eta|_\Hor$ is weakly non-degenerate. Then 
    \[ \ker d\eta|_\Hor = \ker \eta \cap \ker d\eta=\braces{0}.\] 

    To show that $\flat$ is injective, we show that it has zero kernel, hence let $X \in \vect(M)$ such that $\flat(X) = 0$. It follows that 
    \begin{align*}
        \flat(X) = \iota_X d\eta + \eta(X)\eta &=0\\
        \Rightarrow \eta(X)\eta = -\iota_X d\eta. &
    \end{align*}
    Then contracting both sides by $X$,
    \[\eta(X)\eta(X) = d\eta(X,X) = 0,\]
    therefore $\eta(X)\equiv 0$ and so $X$ is a section of $\Hor$.
    It hence follows that $\iota_X d\eta = 0$, and so $X$ is a section of $\Ver$. Hence, for all $p \in M$, $X_p \in \Hor_p \cap \Ver_p = {0}$, so $X=0$ identically and $\flat$ is injective.

    $(3)\Rightarrow (1)$: Assume that $\flat$ is an injective homomorphism of $\Cinf(M)$-modules. Then $\ker \flat = \braces{0}$. 

    Let $X \in \vect(M)$ such that $X$ is a section of both $\Hor$ and $\Ver$. Then 
    \[\flat(X) = \iota_X d\eta +\eta(X)\eta = 0,\]
    so $X\in \ker \flat$ and $X=0$ identically. We conclude that $\Hor \cap \Ver = \braces{0}$.
    
    Since $\Hor\cap\Ver = \braces{0}$, $\codim\Hor_p = 1$ for all $p$, and since $\dim d\eta>0 $ by assumption of degeneracy, it follows, by lemma \ref{codimension lemma} (``the codimension lemma") that $\dim \Ver_p = 1$ for all $p$. It hence follows by the second part of the lemma that $\Hor \oplus \Ver = TM$.
\end{proof}

\begin{definition}[Contact form\label{def: inf dim contact form}]
    Any 1-form $\eta$ as in the theorem above is a \emph{(weakly) contact form} if it satisfies any (hence all) of conditions (1)-(3).
\end{definition}

\begin{definition}[Contact manifold]
    Let $(M,\xi)$ be a manifold paired with some codimension-1 distribution, $\xi \subset TM$.

    If every point $p \in M$ has some neighbourhood $U$ such that, for some $\eta_U \in \Omega^1(U)$, $\xi|_U = \ker \eta_U$ and each $\eta_U$ is a contact form on $U$, then we call $\xi$ a \emph{contact distribution} and $(M,\xi)$ a \emph{contact manifold}.
\end{definition}

\begin{definition}
    A distribution $\xi \subset TM$ is called co-orientable if the quotient bundle, $TM/\xi$ is trivial. 
\end{definition}
\begin{theorem}
    Let $\xi\subset TM$ be a distribution on $M$. $\xi$ is co-orientable if and only if $\xi = \ker \alpha$ for some nowhere-zero 1-form $\alpha \in \Omega^1(M)$.
\end{theorem}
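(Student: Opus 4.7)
The plan is to reduce the problem to the standard line-bundle fact that a real line bundle is trivial if and only if it admits a nowhere-zero section, and to exploit the correspondence between sections of $(TM/\xi)'$ and 1-forms on $M$ that annihilate $\xi$. Since $\xi$ is a codimension-1 subbundle of $TM$, the quotient $TM/\xi$ is a rank-1 vector bundle, and at each point $p \in M$ the fibre $T_pM/\xi_p$ is genuinely 1-dimensional regardless of whether the model space $E$ is infinite-dimensional. In particular, all the non-trivial linear algebra happens in finite-dimensional fibres.

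First I would set up the bijection between $\Gamma((TM/\xi)')$ and the subspace of $\Omega^1(M)$ consisting of 1-forms that vanish on $\xi$: given the canonical projection $\pi \colon TM \twoheadrightarrow TM/\xi$, pullback by $\pi$ sends a section $\widetilde\alpha$ of $(TM/\xi)'$ to the 1-form $\alpha := \widetilde\alpha \circ \pi$, and conversely any $\alpha \in \Omega^1(M)$ with $\xi \subseteq \ker\alpha$ factors uniquely through $\pi$. Moreover, this bijection preserves the ``nowhere-zero'' property because the fibres of $TM/\xi$ are 1-dimensional: $\widetilde\alpha_p \neq 0$ iff $\alpha_p$ is non-zero on some (equivalently any) vector outside $\xi_p$.

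For the forward direction, assume $TM/\xi$ is trivial. Then its dual line bundle $(TM/\xi)'$ is also trivial, so it admits a nowhere-zero section $\widetilde\alpha$. The corresponding $\alpha \in \Omega^1(M)$ is nowhere zero and satisfies $\xi \subseteq \ker\alpha$; since $\alpha_p \neq 0$ and $\mathrm{codim}\,\xi_p = 1$, we get equality $\xi_p = \ker\alpha_p$ at every point. For the converse, given a nowhere-zero $\alpha$ with $\ker\alpha = \xi$, the 1-form descends to a section $\widetilde\alpha$ of $(TM/\xi)'$ which is nowhere zero by the bijection above; this trivialises $(TM/\xi)'$, and hence $TM/\xi$, via the isomorphism $v \mapsto \widetilde\alpha(v)$ of $TM/\xi$ with $M \times \R$.

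The one point requiring a brief check is that the quotient $TM/\xi$ really is a smooth line bundle in the convenient (or Fréchet, or Banach) setting. This follows because $\xi$ is by hypothesis a smooth subbundle and its fibres have codimension 1: a codimension-1 closed subspace of any lcTVS admits a 1-dimensional topological complement, obtained by choosing any vector outside it, so local complements to $\xi$ exist and provide local trivialisations of the quotient. I expect this to be the only subtlety, and it is mild; once it is in place, the argument is purely fibrewise linear algebra together with the standard line-bundle triviality criterion.
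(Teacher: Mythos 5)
Your proof is correct and takes essentially the same route as the paper's (which is lifted from Geiges): both identify 1-forms annihilating $\xi$ with sections of the line bundle $(TM/\xi)'$ via the projection $\pi\colon TM \to TM/\xi$, and then invoke the fact that a line bundle is trivial if and only if it admits a nowhere-zero section. Your extra check that $TM/\xi$ is a genuine smooth line bundle in the infinite-dimensional setting is a point the paper leaves implicit, but it does not change the argument.
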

\begin{proof}
    This argument is lifted wholesale from \cite[p.3]{Geiges08}.

    Locally, the bundles $TM$ and $T'M$ are trivial, so over a small neighbourhood $U\subset M$ we may write a 1-form $\alpha_U$ defining $\xi|_U$ as the pullback of some non-zero section of $(TM/\xi)'$ by the projection $\pi: TM \rightarrow TM/\xi$. Clearly $\ker \alpha_U = \xi_U$.
    
    Assume that $\xi$ is co-orientable, so that, globally, $TM/\xi$ is trivial, and hence $(TM/\xi)'$ has a global non-zero section. Then construct $\alpha \in \Omega^1(M)$ as above.

    Conversely, if $\alpha$ is a nowhere-zero 1-form with $\xi = \ker \alpha$, then this induces a non-zero section of $(TM/\xi)'$ and in turn this implies that $TM/\xi$ is trivial.
\end{proof}

\begin{definition}
    If $\xi = \ker \eta$ for some contact form $\eta \in \Omega^1(M)$ then we call $(M, \xi)$ a \emph{co-orientable contact manifold} and $\xi$ a \emph{co-orientable contact distribution}. Any such $\xi$ defines a conformal class, $\braces{f\eta \, | \, f\in \Cinf(M), \, f>0}$, of contact forms which define $\xi$.

    Given a specific choice, $\eta$, of contact form for a co-orientable distibution we call $(M,\eta)$ a \emph{strict, co-orientable contact manifold}.
\end{definition}

\begin{remark}
    From now on a \emph{contact manifold} will always refer to a \emph{strict co-orientable contact manifold} and will be denoted $(M,\eta)$. We will also henceforth interchangeably refer to the contact distribution and the horizontal distribution, $\xi = \Hor  = \ker \eta$.
\end{remark}

\begin{definition}[Reeb vector field]
    Since $\Ver$ is a distribution of constant rank 1 (a line field) and $\eta$ is nowhere zero, there exists a unique vector field $R \in \vect(M)$ such that $\iota_R \eta = 1$ and $\iota_R d\eta = 0$. This vector field is the \emph{Reeb vector field}.
\end{definition}

\begin{theorem}
    $(\Hor, d\eta|_{\Hor})$ is a strongly symplectic distribution iff $\flat$ is an isomorphism $TM \rightarrow T'M$.
\end{theorem}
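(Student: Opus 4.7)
The approach is to diagonalise $\flat$ against the Whitney decomposition $TM = \Hor \oplus \Ver$ (from Theorem~\ref{thm: inf dim contact forms}) and a corresponding dual decomposition of $T'M$. Because $\eta$ is a contact form with Reeb field $R$, $\Ver$ is the line bundle spanned by $R$; dually, setting $\mathcal{A} := \braces{\alpha \in T'M \mid \alpha(R) = 0}$ gives a decomposition $T'M = \mathcal{A} \oplus \langle \eta \rangle$ via $\alpha = (\alpha - \alpha(R)\eta) + \alpha(R)\eta$.

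The key computation is to express $\flat$ block-wise. For $X = X_H + \lambda R$ with $X_H \in \Hor$,
\[\flat(X) = \iota_{X_H} d\eta + \lambda \iota_R d\eta + (\eta(X_H) + \lambda \eta(R))\eta = \iota_{X_H} d\eta + \lambda \eta,\]
and $\iota_{X_H} d\eta \in \mathcal{A}$ since $d\eta(X_H, R) = -(\iota_R d\eta)(X_H) = 0$. Hence $\flat$ is block-diagonal with respect to the two decompositions: the $\langle R \rangle \to \langle \eta \rangle$ block $\lambda R \mapsto \lambda \eta$ is tautologically an isomorphism, so $\flat$ is an isomorphism if and only if the $\Hor \to \mathcal{A}$ block $X_H \mapsto \iota_{X_H} d\eta$ is. Composing the latter with the restriction map $\rho\colon \mathcal{A} \to \Hor'$, which is itself a bundle isomorphism with explicit inverse $\beta \mapsto (X \mapsto \beta(X - \eta(X) R))$, yields precisely $\flat_{d\eta|_\Hor}\colon X_H \mapsto d\eta(X_H, -)|_\Hor$. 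Thus $\flat$ is an isomorphism if and only if $\flat_{d\eta|_\Hor}$ is one, which is the definition of $(\Hor, d\eta|_\Hor)$ being strongly symplectic.

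The only subtle point is topological rather than algebraic: outside the Banach category, ``isomorphism'' of bundles means a fibrewise toplinear bijection, i.e.\ with \emph{continuous} inverse and smooth dependence on the foot-point, and this is not automatic without an open mapping theorem. I would therefore exhibit the inverses of both $\rho$ and the $\langle R \rangle \to \langle \eta \rangle$ block explicitly, as above, with continuity and smoothness of the inverses following directly from smoothness of $R$ and $\eta$, rather than rely on any such general result.
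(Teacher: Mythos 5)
Your proof is correct and takes essentially the same route as the paper: your decomposition $T'M = \mathcal{A} \oplus \langle \eta \rangle$ is exactly the paper's splitting $\alpha = \alpha_\Hor + \alpha_\Ver$ with $\alpha_\Ver = \alpha(R)\eta$, and inverting your $\Hor \to \mathcal{A}$ block reconstructs precisely the paper's $\sharp(\alpha) = X + \alpha(R)R$. Your closing remark about exhibiting continuous inverses explicitly rather than appealing to an open mapping theorem is a sensible refinement that the paper leaves implicit, but it does not alter the argument.
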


\begin{proof}
    $\Rightarrow$: Assume that $d\eta|_\Hor$ is bijective. Hence $\eta$ is a contact form by definition \ref{def: inf dim contact form} as long as $d\eta$ is degenerate on the whole manifold (otherwise $d\eta$ is a symplectic form). Let $\alpha \in \Omega^1(M)$, and $Z+fR$, $Z \in \Gamma(\Hor\rightarrow M)$, $f\in \Cinf(M)$, be a general vector field on $M$. Then $\alpha = \alpha_\Hor + \alpha_\Ver$ where
    \begin{align*}
        \alpha_\Hor(Z+fR) &= \alpha(Z) \\
        \alpha_\Ver(Z+fR) &= f\alpha(R) = \eta(Z+ fR)\alpha(R)
    \end{align*}
    Define $\sharp(\alpha) := X + \alpha(R)R$ where $X$ is the unique horizontal vector field satisfying $\iota_X d\eta = \alpha_\Hor$, which exists by assumption of bijectivity. Then 
    \begin{align*}
        (\flat\circ\sharp)(\alpha) &= \iota_{X + \alpha(R)R}d\eta + \eta(X + \alpha(R)R)\eta \\
        &= \iota_X d\eta + \alpha(R)\eta \\
        &= \alpha_\Hor + \alpha_\Ver \\
        &= \alpha
    \end{align*}
    and 
    \begin{align*}
        (\sharp \circ \flat)(Z+fR) &= \sharp(\iota_Z d\eta + f\eta)\\
        &=Z+fR
    \end{align*}

    $\Leftarrow$: Assume that $\flat$ is bijective. Let $\alpha \in \Omega^1(M)$. Then there exists a unique $(Z+fR) \in\vect(M)$ such that $\flat(Z+fR) = \alpha$. Then when restricting this to $\Hor$ we get $\alpha|_\Hor = \flat(Z) = \iota_Z d\eta|_\Hor$, so there exists a unique horizontal vector field $Z$ such that $\alpha|_\Hor = \iota_Z d\eta$. 
\end{proof}

\begin{definition}
    When $d\eta$ is strongly non-degenerate on $\Hor$, then we call $\eta$ a \emph{strongly contact form}.
\end{definition}

The above property implies that $M$ is modelled by some lcTVS $E$ with $E'\cong E$. Hence, proper Fréchet manifolds may never have strongly contact forms, since the only reflexive Fréchet spaces are Banach spaces.

We have generalised contact geometry to infinite-dimensional settings, but at the cost of our original definition, that $\eta\wedge(d\eta)^n$ is a volume form on $(M^{2n+1})$, since this depends on the dimension being finite. However, the following theorem provides us with an analogue. It also suggests that the connection between contact forms and Pfaffian problems persists in infinite dimensions.

\begin{theorem}\label{thm: infinite-dimensional analogue of definition}
    If $(M,\eta)$ is an infinite-dimensional contact manifold, then
    \[\eta\wedge (d\eta)^k \neq 0 \text{ for all }k\in \mathbb{N}.\]
\end{theorem}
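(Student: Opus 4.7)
The plan is to prove this pointwise: fix $p \in M$ and $k \in \mathbb{N}$ and exhibit $2k+1$ tangent vectors at $p$ on which $\eta \wedge (d\eta)^k$ does not vanish. The natural candidates are the Reeb vector $R_p$ together with a \emph{symplectic family} $e_1, f_1, \ldots, e_k, f_k \in \Hor_p$, by which I mean vectors satisfying $d\eta(e_i, f_j) = \delta_{ij}$ and $d\eta(e_i, e_j) = d\eta(f_i, f_j) = 0$. I would build such a family inductively. Because $M$ is infinite-dimensional and $\Ver_p$ has rank $1$, the space $\Hor_p$ is itself infinite-dimensional. Given a family of length $j < k$ spanning $W_j \subset \Hor_p$, the map
\[ \pi_j \colon \Hor_p \rightarrow W_j, \quad v \mapsto \sum_{i=1}^j \bigl( d\eta(v, f_i) e_i - d\eta(v, e_i) f_i \bigr) \]
is a projection, so $\Hor_p = W_j \oplus W_j^{d\eta}$, where $W_j^{d\eta}$ denotes the $d\eta$-orthogonal complement of $W_j$ inside $\Hor_p$. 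A short argument shows that $d\eta|_{W_j^{d\eta}}$ remains weakly non-degenerate: if $u \in W_j^{d\eta}$ pairs trivially with every element of $W_j^{d\eta}$, then it also pairs trivially with every element of $W_j$ by definition, hence with all of $\Hor_p$, so $u = 0$ by weak non-degeneracy of $d\eta$ on $\Hor_p$ (Theorem \ref{thm: inf dim contact forms}(2)). Since $W_j^{d\eta}$ is infinite-dimensional, I can pick a non-zero $e_{j+1} \in W_j^{d\eta}$ and then use non-degeneracy to find $f_{j+1} \in W_j^{d\eta}$ with $d\eta(e_{j+1}, f_{j+1}) = 1$, extending the family.

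Once the symplectic family is in hand, I would evaluate $(\eta \wedge (d\eta)^k)(R_p, e_1, f_1, \ldots, e_k, f_k)$ using the standard expansion of the wedge of a $1$-form against a $2k$-form. Every term in which $\eta$ is contracted with one of the horizontal vectors $e_i$ or $f_i$ vanishes, since $e_i, f_i \in \ker \eta$. The only surviving contribution is
\[ \eta(R_p) \cdot (d\eta)^k(e_1, f_1, \ldots, e_k, f_k) = (d\eta)^k(e_1, f_1, \ldots, e_k, f_k), \]
and the $k$-th wedge power of a skew form on a symplectic family of the above form is the usual combinatorial constant $k! \neq 0$, computed directly from the definition of the wedge product.

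The main obstacle, and really the only step beyond bookkeeping, is the inductive construction and in particular the verification that $d\eta$ remains weakly non-degenerate on the symplectic complement at each stage; this is where infinite-dimensionality is used essentially, since in finite dimensions the complement eventually becomes trivial and the induction terminates at $k = n$. The final wedge computation on the symplectic family, and the cancellation of all but one term in the wedge $\eta \wedge (d\eta)^k$, are then routine multilinear algebra.
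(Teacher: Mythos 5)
Your proposal is correct and follows essentially the same route as the paper: an inductive construction of a canonical symplectic family $e_1,f_1,\dots,e_k,f_k$ in $\Hor_p$ via the symplectic-complement decomposition (the paper packages your projection $\pi_j$ and the non-degeneracy of $W_j^{d\eta}$ as Lemma \ref{fin sim symplectic vector spaces}), followed by evaluation of $\eta\wedge(d\eta)^k$ on the Reeb direction together with that family. The only difference is the harmless normalisation constant ($k!$ versus $1$) coming from the wedge-product convention.
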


The proof of this theorem first requires a lemma on symplectic vector spaces, which is a generalisation of theorem 1.1 of \cite{CannasdaSilva}.

\begin{lemma}\label{fin sim symplectic vector spaces}
    Let $(V,\Omega)$ be a weakly symplectic vector space, i.e. let $\Omega\colon V\times V \rightarrow \R$ be an antisymmetric bilinear form such that $\Omega(v,-)=0 \ \Rightarrow \ v=0$. For a subspace $W\subset V$ define
    \[ W^{\bot_\Omega}:=\{ v\in V | \ \Omega(v,w)=0 \ \forall w \in W \}. \]
    If $W$ is finite-dimensional and $(W, \Omega|_W)$ is a weakly symplectic space, then
    \begin{itemize}
        \item $V = W\oplus W^{\bot_\Omega}$ and 
        \item $(W^{\bot_\Omega}, \Omega|_{W^{\bot_\Omega}})$ is weakly symplectic.
    \end{itemize}
\end{lemma}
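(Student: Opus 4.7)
The plan is to adapt the standard finite-dimensional symplectic complement argument (as in Cannas da Silva), being careful that dimension counting is only available on the finite-dimensional side $W$. Since $W$ is finite-dimensional and $\Omega|_W$ is weakly non-degenerate, it is automatically strongly non-degenerate, so the map $\flat_W \colon W \to W^*$, $w \mapsto \Omega(w, -)|_W$, is a linear isomorphism. This single observation drives the whole argument.

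First I would check that the sum $W + W^{\bot_\Omega}$ is direct. If $w \in W \cap W^{\bot_\Omega}$, then $\Omega(w, w') = 0$ for every $w' \in W$, so $\flat_W(w) = 0$ and hence $w = 0$. Next, to show $V = W + W^{\bot_\Omega}$, pick any $v \in V$ and consider the linear functional $\ell_v \in W^*$ defined by $\ell_v(w') := \Omega(v, w')$. By the isomorphism $\flat_W$, there is a unique $w_v \in W$ with $\Omega(w_v, w') = \Omega(v, w')$ for all $w' \in W$. Then $v - w_v \in W^{\bot_\Omega}$, and the decomposition $v = w_v + (v - w_v)$ establishes the direct sum.

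Finally, for weak non-degeneracy of $\Omega|_{W^{\bot_\Omega}}$, suppose $u \in W^{\bot_\Omega}$ satisfies $\Omega(u, u') = 0$ for every $u' \in W^{\bot_\Omega}$. Using the decomposition just established, any $v \in V$ can be written as $v = w + u'$ with $w \in W$ and $u' \in W^{\bot_\Omega}$; then
\[
\Omega(u, v) = \Omega(u, w) + \Omega(u, u') = 0 + 0 = 0,
\]
where the first term vanishes because $u \in W^{\bot_\Omega}$. Weak non-degeneracy of $\Omega$ on the ambient space $V$ now forces $u = 0$. Antisymmetry and bilinearity of the restriction are automatic.

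The only genuinely delicate point is the surjectivity step in constructing the decomposition: in infinite dimensions one cannot appeal to dimension counting on $V$, but the finite-dimensionality of $W$ is exactly what lets $\flat_W$ be surjective onto $W^*$, which is the one place this is needed. I expect this is where the hypothesis $\dim W < \infty$ is essential, and where the statement would fail without it (for infinite-dimensional weakly symplectic $W$ the map $\flat_W$ lands only in $W'$ and need not be surjective, so the splitting can break down).
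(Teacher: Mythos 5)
Your proof is correct and follows essentially the same route as the paper's: both arguments hinge on the fact that finite-dimensional weak non-degeneracy of $\Omega|_W$ is automatically strong, which yields a projection $v \mapsto w_v$ onto $W$ with $v - w_v \in W^{\bot_\Omega}$, and then deduce non-degeneracy of the restriction from non-degeneracy on $V$. The only difference is presentational — the paper writes $\flat_W^{-1}$ explicitly in a canonical (Darboux) basis $\langle e_1, f_1, \dots, e_k, f_k\rangle$, whereas you invoke the isomorphism $\flat_W\colon W \to W^*$ abstractly — and your closing remark correctly identifies where finite-dimensionality is essential.
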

\begin{proof}
    Since $W$ is finite-dimensional, weakly symplectic implies strongly symplectic, so $W$ is even-dimensional and there exists a canonical basis $W = \angles{e_1, f_1,...,e_k,f_k}_\R$ such that 
    \begin{itemize}
        \item $\Omega(e_i,f_j) = \delta_{ij}$
        \item $\Omega(e_i, e_j) = \Omega(f_i, f_j)= 0$.
    \end{itemize}
    To show that $V = W\oplus W^{\bot_\Omega}$, first we show that $W \cap W^{\bot_\Omega} = \{0\}$, so pick $v\in W \cap W^{\bot_\Omega} $. Then $v = a_1 e_1 + b_1 f_1 +...+b_k f_k$ for some $a_1,...,a_k, b_1,...,b_k \in \R$. So,
    \begin{align*}
        0=&\Omega(e_i, v) = b_i \\
        0=& \Omega(f_i,v) = -a_i
    \end{align*}
    and $v=0$.
    Now to show that $V = W + W^{\bot_\Omega}$, let $v\in V$, and write
    \[v = \left(\sum_i \Omega(e_i, v)f_i - \Omega(f_i, v) e_i \right) + \left(v-\left(\sum_i \Omega(e_i, v)f_i - \Omega(f_i, v) e_i \right)\right), \]
    noting that the first term belongs to $W$ and the second to $W^{\bot_\Omega}$.

    $\Omega|_{W^{\bot_\Omega}}$ is antisymmetric, and it is weakly non-degenerate because, for any non-zero $v\in W^{\bot_\Omega}$, we have $\Omega(v,w)=0$ for all $w\in W$, so there must exist some $u\in W^{\bot_\Omega}$ such that $\Omega(v,u)\neq 0 $, otherwise $v=0$. Hence we conclude that $(W^{\bot_\Omega}, \Omega|_{W^{\bot_\Omega}})$ is weakly symplectic.
\end{proof}
 
The proof of theorem \ref{thm: infinite-dimensional analogue of definition} now follows readily.

\begin{proof}
    Fix $p\in M$, and $k\in \mathbb{N}$. $T_pM = \Hor_p \oplus \Ver_p$, so pick some element $v\in \Ver_p$, which we can assume satsfies $\eta(v)=1$. Now note that $(\Hor_p, d\eta|_{\Hor_p})$ is a weakly symplectic vector space. Choose some $e_1\in \Hor_p$. Then by non-degeneracy of $d\eta|_{\Hor_p}$ there is some $f_1 \in \Hor_p$ with $d\eta(e_1,f_1)=1$. Define $V_1 = \angles{e_1, f_1}$. $(V_1, d\eta|_{V_1})$ is a finite-dimensional symplectic vector space, so we can apply lemma \ref{fin sim symplectic vector spaces}, which tells us that $\Hor_p = V_1 \oplus V_1^{\bot_{d\eta|_{\Hor_p}}}$. Using induction, we can repeat this process finitely many times so that we have $2k$ vectors $e_1,f_1,...,e_k,f_k\in\Hor_p$ which form a canonical basis of a finite-dimensional subspace of $\Hor_p$. We then compute
    \begin{align*}
        \left(\eta\wedge (d\eta)^k\right)(v,e_1,f_1,...,e_k,f_k) &= \eta(v)\cdot d\eta(e_1,f_1) \dots  d\eta(e_k,f_k)\\
        &=1 \ \neq 0.
    \end{align*}
\end{proof}

An interesting question is to what degree the converse of this holds. That is, let $M$ be a manifold and $\eta\in\Omega^1(M)$ be a 1-form on $M$ such that for all $k\in \mathbb{N}$, $\eta \wedge (d\eta)^k\neq 0$, then which extra conditions on $M$ and $\eta$ make $\eta$ into a contact form? We note the following lemma as a first step towards this open question.
\begin{lemma}\label{lem: H cap V is involutive in general}
    Let $M$ be a manifold and $\eta \in \Omega^1(M)$. Define $\Hor:= \ker \eta$ and $\Ver:= \ker d\eta$ as above. Then $\Hor \cap \Ver$ is an involutive distribution.
\end{lemma}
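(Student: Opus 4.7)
The plan is to take two sections $X, Y$ of $\Hor \cap \Ver$ and verify that the bracket $[X,Y]$ lies in both $\Hor$ and $\Ver$, using only the standard Cartan calculus, which the excerpt's reference to \cite{KrieglMichor97} guarantees is available on convenient manifolds for kinematic vector fields and kinematic forms. By hypothesis, each of $X, Y$ satisfies the four conditions
\[\eta(X) = 0, \qquad \eta(Y) = 0, \qquad \iota_X d\eta = 0, \qquad \iota_Y d\eta = 0,\]
and we must use these to show $\eta([X,Y]) = 0$ and $\iota_{[X,Y]}d\eta = 0$.

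First I would show the horizontal part. The intrinsic formula for the exterior derivative of a 1-form reads
\[d\eta(X,Y) = X\bigl(\eta(Y)\bigr) - Y\bigl(\eta(X)\bigr) - \eta([X,Y]).\]
The first two terms on the right vanish because $\eta(X) = \eta(Y) = 0$, and the left-hand side vanishes because $\iota_X d\eta = 0$. Hence $\eta([X,Y]) = 0$, i.e.\ $[X,Y]$ is a section of $\Hor$.

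Next I would handle the vertical part via the commutator identity $[\mathcal L_X, \iota_Y] = \iota_{[X,Y]}$ applied to the 2-form $d\eta$:
\[\iota_{[X,Y]} d\eta = \mathcal L_X(\iota_Y d\eta) - \iota_Y(\mathcal L_X d\eta).\]
The first term vanishes because $\iota_Y d\eta = 0$. For the second, Cartan's magic formula gives
\[\mathcal L_X d\eta = d(\iota_X d\eta) + \iota_X(d d\eta) = d(0) + 0 = 0,\]
using $\iota_X d\eta = 0$ and $d^2 = 0$. Thus $\iota_{[X,Y]} d\eta = 0$, so $[X,Y]$ is a section of $\Ver$ as well. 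Combining the two conclusions gives involutivity.

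The proof is short because no use is ever made of the contact condition, only the identities of Cartan calculus, which is the reason the lemma holds in this generality. The only genuine subtlety is ensuring that these identities really are available in the convenient setting for kinematic objects; this is covered by \textsection 33 of \cite{KrieglMichor97}, and the excerpt has already invoked exactly this framework when defining $\vect(M)$ as a Lie subalgebra of the operational vector fields. So there is no real obstacle beyond invoking the right source.
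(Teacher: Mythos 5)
Your proof is correct and follows essentially the same route as the paper's: both establish $\eta([X,Y])=0$ from the intrinsic formula for $d\eta(X,Y)$, and both derive $\iota_{[X,Y]}d\eta=0$ purely from Cartan calculus and $d^2=0$. The only cosmetic difference is that the paper expands $0=dd\eta(X,Y,Z)$ via the intrinsic formula for the exterior derivative of a 2-form, whereas you reach the same conclusion through the commutator identity $[\mathcal{L}_X,\iota_Y]=\iota_{[X,Y]}$ together with Cartan's magic formula; these are interchangeable, and your remark that the identities are available for kinematic objects in the convenient setting is exactly the justification the paper relies on.
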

\begin{proof}
    Let $X,Y \in \vect(M)$ such that $\eta(X) = \eta(Y) = 0$ and $\iota_X d\eta = \iota_Y d\eta = 0$.
    Then 
    \begin{align*}
        0 &=d\eta(X,Y) \\
        &= X.\eta(Y) - Y.\eta(X) - \eta([X,Y])\\
        &= -\eta([X,Y])
    \end{align*}
    and for arbitrary $Z \in \vect(M)$,
    \begin{align*}
        0 = dd\eta =& X.d\eta(Y,Z)-Y.d\eta(X,Z)+Z.d\eta(X,Y)\\
        &\qquad -d\eta([X,Y], Z) + d\eta([X,Z], Y) - d\eta([Y,Z], X) \\
         =& -d\eta([X,Y], Z).
    \end{align*}
    We therefore conclude that $\eta([X,Y])=0$ and $\iota_{[X,Y]}d\eta = 0$ for any such $X,Y$ and that $\Hor \cap \Ver$ is an involutive distribution.
\end{proof}

Finally, we note that the property that $\forall k,\ \eta\wedge (d\eta)^k \neq 0$ implies that the codimension of $\Ver$ is infinite. 

\section{Examples and constructions}

We now generalise some well-known methods of constructing contact manifolds and produce a few examples. In particular we discuss contact forms on spaces of the form $\R \times L^p(X,d\mu) \times L^q(X,d\mu)$.

\subsection{Contact manifolds via products}\label{sec: trivial bundles}

\begin{theorem}\label{thm: contact form via R-product}
    Let $(M, d\theta)$ be a weakly symplectic infinite-dimensional manifold, and consider the manifolds $(\R, dz)$, $(S^1, dz)$, both parametrised with the coordinate $z$, paired with respective nowhere zero 1-forms, which WLOG we assume are equal to $dz$. 
    
    Consider the manifolds $\R\times M$ and $S^1 \times M$ with projections $\pi_1, \ \pi_2$ onto the first and second factors respectively. Let 
    \[\eta = \pi_1^* dz + \pi_2^*\theta,\]
    be a 1-form on either $\R\times M$ or $S^1 \times M$, which by an understandable abuse of notation we will write $\eta = dz + \theta$.
    Then the manifolds $(\R\times M, \eta)$ and $(S^1 \times M, \eta)$ are contact manifolds wrt definition \ref{def: inf dim contact form}.
\end{theorem}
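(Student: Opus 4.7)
The plan is to verify condition (1) of Theorem \ref{thm: inf dim contact forms}, namely $T(\R\times M) = \Hor\oplus \Ver$, directly from the product decomposition. Since $dz$ is closed, one immediately gets $d\eta = \pi_2^*d\theta$, and writing tangent vectors in the product form $(a,v)\in \R\oplus T_m M$ yields
\[ \eta(a,v) = a + \theta_m(v), \qquad d\eta\bigl((a,v),(b,w)\bigr) = d\theta_m(v,w). \]
From these formulas I would read off $\Hor_{(z,m)} = \{(a,v) : a = -\theta_m(v)\}$; and weak non-degeneracy of $d\theta$ forces the vertical condition $d\theta_m(v,\cdot)=0$ to give $v=0$, so $\Ver_{(z,m)} = \R\partial_z$ is a rank-1 line field. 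In particular $\Ver$ is non-zero, which verifies the standing degeneracy hypothesis of Theorem \ref{thm: inf dim contact forms}.

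To establish the direct sum, I would note that $(a,0)\in\Hor$ forces $a=0$, so $\Hor\cap\Ver=\{0\}$, and that an arbitrary $(a,v)$ splits as $(-\theta_m(v),v) + (a+\theta_m(v),0)$, a horizontal plus a vertical vector. Condition (1) thus holds and Theorem \ref{thm: inf dim contact forms} delivers that $\eta$ is a contact form. The $S^1\times M$ case is identical: the argument only uses that $dz$ is closed and nowhere zero, both of which hold on $S^1$.

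I do not anticipate any real obstacle here, as the product structure makes each of the three equivalent conditions transparent. The one point that requires explicit attention is that degeneracy of $d\eta$ is a hypothesis of Theorem \ref{thm: inf dim contact forms} rather than a conclusion, so it must be verified --- though this is immediate because $\partial_z \in \ker d\eta$. As a cross-check one could equally well verify condition (2): the projection $(a,v)\mapsto v$ identifies $\Hor_{(z,m)}$ with $T_m M$, and under this identification $d\eta|_\Hor$ pulls back to $d\theta_m$, which is weakly symplectic by hypothesis.
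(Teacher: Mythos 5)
Your proposal is correct and follows essentially the same route as the paper: compute $d\eta = \pi_2^* d\theta$, identify $\Hor$ and $\Ver$ explicitly in the product decomposition, and verify condition (1) of Theorem \ref{thm: inf dim contact forms}. If anything you are slightly more careful than the paper, since you exhibit the explicit splitting $(a,v) = (-\theta_m(v),v) + (a+\theta_m(v),0)$ rather than passing directly from $\Hor\cap\Ver=\{0\}$ to the direct sum, and you explicitly check the standing degeneracy hypothesis on $d\eta$.
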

\begin{proof}
    The property of being a contact form can be checked locally, therefore we can assume WLOG that we are in the first case, with $(\R\times M, \eta)$, so pick any $(z,p)\in \R \times M$.

    We have \[d\eta = ddz + d\theta = d\theta,\] since the differential commutes with pull-backs (for details of this fact in the case of convenient manifolds, refer to \cite[\textsection 33]{KrieglMichor97}).
    Hence, for $(z,p)\in \R \times M$,
    \begin{align*}
        \Hor_{(z,p)} &= \{(\zeta, v) \in T_{(z,p)}(\R\times M) | \ \zeta + \theta(v) = 0\}\\
        &= \braces{(-\theta(v), v) | \ v \in T_p M},\\
        \\
        \Ver_{(z,p)}&= \braces{ (\zeta, v) \in T_{(z,p)}(\R\times M) |\ d\eta((\zeta, v) ,-)=0 }\\
        &= \braces{(\zeta,v) | \ d\theta(v,-) = 0}\\
        &= \braces{(\zeta, 0) | \zeta \in \R},
    \end{align*}
    where the last equality follows from the fact that $d\theta$ is weakly non-degenerate. We can therefore conclude that for all $(z,p)\in \R\times M$, $\Hor_{(z,p)}\cap \Ver_{(z,p)} = 0$, and hence that $TM = \Hor \oplus \Ver$, so $\eta$ satisfies the conditions of definition \ref{def: inf dim contact form}. As an exercise, the reader is invited to check the other two conditions of definition \ref{def: inf dim contact form} directly. 

    It should be clear that for any local representation of $U\subset M$, the local representation of the Reeb vector field is $R = (1,0) =: \frac{\partial}{\partial z}$.
\end{proof}

The following example is a modification of an example of a weak symplectic Fréchet space given in \cite{DiezRudolph24}.

\begin{example}[A contact Fréchet manifold]\label{ex: R times Omega^1 M}
    Let $(M,g)$ be a closed orientable 2-dimensional Riemannian manifold, with metric $g$ inducing a volume form $\vol \in \Omega^2(M)$, and hence a Hodge star operator $*: \Omega^1(M) \rightarrow \Omega^1(M)$ such that $\vol = \alpha \wedge * \alpha$ for any $\alpha \in \Omega^1(M)$. We also denote by $\alpha^{\sharp_g}$ the unique vector field satisfying $g(\alpha^{\sharp_g}, -) = \alpha$. This defines an inner product $(-|-)$ on $\Omega^1(M)$ by
\[ (\alpha|\beta):=\int_M g(\alpha^{\sharp_g}, \beta^{\sharp_g}) = -\int_M * \alpha \wedge \beta. \]
Refer to \cite[\textsection 9.4]{JMLee09} for details.

Now take the Fréchet space $\mathfrak{M} = \R \times \Omega^1(M)$, which we will consider as a manifold modelled by itself. We will denote $p \in \mathfrak{M}$ by the pair $p = (z, \alpha)$ and we will abuse notation to represent tangent vectors by
\[ \zeta \deez + \beta \in T_{(z,\alpha)}\mathfrak{M} \cong \mathfrak{M}, \] 
where $\zeta \in \R$ and $\alpha \in \Omega^1(M)$. Further tangent vectors will be denoted $\theta \deez + \gamma$. We should interpret mixed expressions such as $dz(\alpha)$ and $\alpha \wedge \deez$ to be equal to 0, in order to make our notation consistent. Then $(\mathfrak{M}, \eta)$ is a contact manifold, where
\[\eta_{(z,\alpha)}(\zeta + \beta) = \zeta + \frac{1}{2}\int_M \alpha \wedge \beta,\]
or, 
\[\eta_{(z, \alpha)} = dz + \frac{1}{2}\int_M \alpha \wedge -.\]

\end{example}
\begin{proof}
    It suffices, due to the above theorem, to check that $(\Omega^1(M), \omega)$ is a symplectic form when $\omega = d\theta$ and, for $\beta \in T_\alpha(\Omega^1(M)) \cong \Omega^1(M)$,
    \[\theta_\alpha(\beta) = \frac{1}{2}\int_M \alpha \wedge \beta.\]

    To evaluate $d\theta_\alpha(\beta, \gamma)$ for $\beta, \gamma \in T_\alpha(\Omega^1(M)) \cong \Omega^1(M)$, extend $\beta$ and $\gamma$ to constant vector fields (so that $[\beta, \gamma]=0$), and write
    \begin{align*}
        d\theta(\beta, \gamma) &= \beta. \theta(\gamma) - \gamma. \theta(\beta)\\
        &= \frac{1}{2}\left. \frac{d}{dt} \right|_{t=0}\int_M (\alpha + \beta t) \wedge \gamma - (\alpha + \gamma t) \wedge \beta \\
        &= \frac{1}{2}\int_M \beta \wedge \gamma - \gamma \wedge \beta = \int_M \beta \wedge \gamma,  \\
    \end{align*}
    which is weakly non-degenerate, since for any Riemannian metric $g$ with an associated Hodge star operator $*$, $d\theta(\alpha, *\alpha) = 0$ iff $\alpha=0$.
\end{proof}

\begin{example}[The jet bundle of a Banach manifold]
    Let $M$ be a Banach manifold modelled on a Banach space $(E, \norm{-})$, and consider its cotangent bundle $T'M$. We let $\angles{w,\xi}$ be the natural pairing between vectors $w\in T_xM$ and bounded covectors $\xi \in T'_xM := (T_xM)'$. Let $p = (x, \xi)$ be a point belonging to $T'M$, and then define the 1-form $\theta \in \Omega^1(T'M)$ by
    \[ \theta_p(v) = \angles{T\pi \cdot v, \xi}. \]
    This is the \emph{canonical 1-form} on the cotangent bundle, whose differential, $\omega = d\theta$ is the \emph{canonical 2-form}, which is a weak symplectic form by the Hahn-Banach theorem. For more details on this construction refer to \cite{Lang72}.

    We can hence take the pair $(\R\times T'M, \ \eta = dz+ \theta)$ which is, by the above theorem, a contact Banach manifold. This is isomorphic to the first jet bundle of smooth functions on $M$, $J^1(M)$, as defined in \cite[\textsection 1.4]{Dodson15}.
\end{example}

\begin{example}\label{ex: L^p spaces}
    A special case of the above is where $M=E$. Then we can write the contact form $\eta\in\Omega^1(\R\times E \times E')\ $ as 
    \[\eta_{(z, x, \varphi)}(\zhat, \xhat, \phihat) = \zhat + \varphi(\xhat),\]
    where $(\zhat, \xhat, \phihat)$ is a tangent vector with foot-point $(z, x, \varphi)\in \R\times E \times E'$.

    For a concrete example, let $E = L^p(X, d\mu)$ with the $L^p$ norm. Then the manifold of interest is $\R\times L^p(X,d\mu) \times L^q(X,d\mu)$ where $p$ and $q$ satisfy $pq=p+q$. Let $(z, f, g)\in \R\times L^p(X,d\mu) \times L^q(X,d\mu)$, and let $(\hat{z}, \hat{f}, \hat{g})$ be a vector tangent to this point. Then 
    \[\eta_{(z, f, g)}(\hat{z}, \hat{f}, \hat{g}) = \hat{z} + \int_X \hat{f}g \ d\mu \]
    defines a contact form. 
    
    Further, when $p=q=2$, by the Riesz representation theorem, this is in fact a strong contact form, since $d\eta$ is strongly symplectic when restricted to $\Hor \cong L^2(X,d\mu)$.
\end{example}

\subsubsection{Canonical contact structures beyond Banach spaces}
The jet bundle construction as above may also take place on convenient manifolds, since the class of convenient vector spaces is large enough that the (bounded) dual of a convenient vector space is once again convenient (for jet bundles on convenient manifolds refer to \cite[\textsection 41]{KrieglMichor97}).

This however does not hold in Fréchet spaces; the only Fréchet spaces whose duals are also Fréchet are Banach spaces. If we wish to consider canonical contact structures with configuration spaces modelled by properly Fréchet spaces, then we can use the following construction, given by Diez and Rudolph in \cite[A.1]{DiezRudolph20}.

\begin{definition}[Dual pairing between vector bundles]
    Let $\pi\colon E \rightarrow M$ and $\rho\colon F \rightarrow M$ be two vector bundles. Assume that all manifolds are Fréchet and let $h\colon E \times_M F \rightarrow \R$ be a bilinear form. We call $h$ a \emph{dual pairing} between $E$ and $F$ if it is fibrewise non-degenerate. Now if $F = TM$ then we call $E$ \emph{a cotangent bundle} of $M$ if such a dual pairing exists and we write $T'Q \cong E$.
\end{definition}
Given a dual pairing the authors then note that, for $p = (x, \xi) \in T'M$ and $v \in T_p(T'M)$ the expression
\[ \theta_p(v) = \angles{T\pi \cdot v, \xi} \]
defines a 1-form on $T'M$ and that $d\theta$ is weakly symplectic. It hence follows from theorem \ref{thm: contact form via R-product} that $dz + \theta$ is a contact form for the manifold $\R \times T'M$.

\subsection{Submanifolds of contact type}
\begin{theorem}[Level sets of contact type]\label{thm: level sets of contact type}
    Let $(M,\omega)$ be a weakly symplectic manifold, and fix some $H\in \Cinf(M)$, called the Hamiltonian. Define $N:=H^{-1}(0)$, and make the following assumptions:
    \begin{itemize}
        \item $N\subset M$ is a submanifold, with inclusion $i\colon N \hookrightarrow M$,
        \item there exists some $Y\in \vect(M)$ such that: \begin{itemize}
            \item $Y$ is transverse to $N$, that is, for all $x\in N$, $T_xM = T_xN \oplus \angles{Y_x}$
            \item $\mathcal{L}_Y \omega = f\omega$ for some $f \in \Cinf(M)$ which is nowhere zero (at least, we assume this holds in a neighbourhood containing $N$), equivalently, $d\iota_Y \omega = f\omega$,
        \end{itemize}
        \item there exists a (necessarily unique) $X_H\in \vect(M)$ such that $\iota_{X_H}\omega = dH$.
    \end{itemize} 
    Then $(N, i^*(\iota_Y \omega))$ is a contact manifold.
\end{theorem}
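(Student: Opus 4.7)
The plan is to take $\eta := i^*(\iota_Y \omega) \in \Omega^1(N)$ and verify condition~(1) of Theorem~\ref{thm: inf dim contact forms}, namely that $TN = \Hor \oplus \Ver$ for $\Hor := \ker\eta$ and $\Ver := \ker d\eta$. The first step is to compute $d\eta$: since $\omega$ is closed, Cartan's formula gives $\mathcal{L}_Y\omega = d\iota_Y\omega = f\omega$, and pulling back by $i$ yields $d\eta = (f\circ i)\cdot i^*\omega$. Because $f$ is nowhere zero on $N$, $\Ver$ coincides with the fibrewise kernel of $i^*\omega$, i.e.\ with those $v\in T_xN$ satisfying $\omega_x(v,w)=0$ for every $w\in T_xN$.

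Next I would single out $X_H|_N$ as a canonical, nowhere-vanishing section of $\Ver$ that plays the role of a non-normalised Reeb vector field. Tangency to $N$ follows from $X_H\cdot H = \omega(X_H, X_H) = 0$; verticality follows from $\omega_x(X_H, v) = dH(v) = 0$ for every $v\in T_xN$; and $X_H$ is not horizontal because $\eta_x(X_H) = \omega_x(Y_x, X_H) = -dH(Y_x)$ is nonzero on $N$, where transversality of $Y$ forces $Y_x \notin T_xN = \ker(dH)_x$. I am using here that the existence of a transverse $Y$ makes $N$ a regular level set of $H$, so that $dH$ is nonzero along $N$ and, via weak non-degeneracy of $\omega$, so is $X_H$.

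To finish the splitting, I would verify $\Hor \cap \Ver = \braces{0}$ by taking $v \in T_xN$ with $\eta_x(v) = 0$ and $\omega_x(w, v) = 0$ for all $w \in T_xN$: the first equation reads $\omega_x(Y_x, v) = 0$, so $\omega_x(-, v)$ vanishes on the direct sum $T_xN \oplus \angles{Y_x} = T_xM$, whence weak non-degeneracy of $\omega$ forces $v = 0$. For $\Hor + \Ver = T_xN$, given $v\in T_xN$ I set $\lambda := \eta_x(v)/\eta_x(X_H(x))$, so that $v - \lambda X_H(x) \in \Hor_x$ and $\lambda X_H(x) \in \Ver_x$. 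With both pieces of the direct-sum decomposition in hand, Theorem~\ref{thm: inf dim contact forms} certifies $\eta$ as a contact form on $N$.

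The only genuine obstacle is the interplay between transversality of $Y$ and the \emph{weak} non-degeneracy of $\omega$. Transversality is used at exactly one point — to promote the equation $\omega_x(-, v) = 0$ from $T_xN$ to the whole tangent space $T_xM$ — and it is precisely there that weak non-degeneracy suffices to conclude $v = 0$, with no need to invert $\flat_\omega$. Every remaining ingredient is an identity manipulation: closedness of $\omega$ rewrites $\mathcal{L}_Y\omega$ as $d\iota_Y\omega$, antisymmetry of $\omega$ forces $X_H$ to be tangent to $N$, and the defining equation $\iota_{X_H}\omega = dH$ together with $H|_N = 0$ places $X_H|_N$ inside $\Ver$ while keeping it out of $\Hor$.
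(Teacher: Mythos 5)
Your proof is correct and follows the same skeleton as the paper's: the same identification of $\Hor_x$ and $\Ver_x$, the same use of transversality plus weak non-degeneracy of $\omega$ to show $\Hor_x\cap\Ver_x=\braces{0}$, and the same use of $X_H|_N$ as the canonical non-vanishing vertical direction. The one place you genuinely diverge is the final step. The paper observes that $\Hor_x$ has codimension $1$ in $T_xN$, invokes the codimension lemma (Lemma \ref{codimension lemma}) to bound $\dim\Ver_x\leq 1$, pins it to exactly $1$ via $X_H$, and then uses the second half of that lemma to conclude $T_xN=\Hor_x\oplus\Ver_x$. You instead produce the splitting explicitly, writing $v=(v-\lambda X_H)+\lambda X_H$ with $\lambda=\eta_x(v)/\eta_x(X_H)$ and $\eta_x(X_H)=\omega_x(Y_x,X_H)=-dH(Y_x)\neq 0$; this is more constructive, bypasses the codimension lemma entirely, and as a bonus exhibits the Reeb field as $-X_H/dH(Y)$ along $N$. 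You also make explicit the computation $d\eta=(f\circ i)\,i^*\omega$, which the paper uses silently when it writes $\Ver_x$ as the radical of $i^*\omega$ rather than of $d\eta$. One caveat you share with the paper: both arguments need $dH$ to be nonzero along $N$, so that $T_xN=\ker dH_x$ and hence $dH(Y_x)\neq 0$. Transversality of $Y$ to the submanifold $N=H^{-1}(0)$ does not by itself guarantee this (consider $H(x,y)=x^2$ on $\R^2$ with $Y=\partial_x$), so your assertion that a transverse $Y$ makes $N$ a regular level set is, strictly speaking, an additional implicit hypothesis --- but it is exactly the same hypothesis the paper relies on when it asserts that $dH(Y_x)=0$ would force $Y$ to be tangent to $N$.
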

\begin{proof}
    We have
    \begin{align*}
        \Hor_x &= \{ v \in T_xN | \ \omega(Y_x, v) = 0 \}\\
        \Ver_x &= \{ v \in T_xN | \ \forall w\in T_xN,\ \omega(v, w) = 0 \}.
    \end{align*}
    Since $\Hor_x$ is the kernel of a non-zero linear map, it must have codimension 1. We also have
    \begin{align*}
        \Hor_x \cap \Ver_x &= \{ v \in T_xN | \ \omega(Y_x, v) = 0, \ \omega(v, w) = 0  \ \forall w\in T_xN\}\\
        &= \{ v\in T_xN | \ \omega(v, \, aY_x + w)=0, \ \forall a \in \R, \, \forall w \in T_xN \} \\
        &= \{ v\in T_xN | \ \omega(v, \, u)=0, \ \forall u\in T_xM \ \}\\
        &=\{0\},
    \end{align*}
    because $\omega_x$ is weakly non-degenerate on $T_xM$. By the codimension lemma, this forces $\Ver_x$ to have dimension either 1 or 0. However we know that it is not zero-dimensional since $\omega_x(X_H, Y_x) = dH(Y_x) \neq 0$ (since otherwise, $Y$ would be tangent to $N$ at this point) and for all $w\in T_xN$, $\omega(X_H, w) = dH(w)= 0$.
\end{proof}

\begin{example}
    Let $M$ be a closed orientable 2-manifold with Riemannianian metric $g$ associated with a Hodge star operator $*\colon \Omega^1(M) \rightarrow \Omega^1(M)$. Define $\mathfrak{M}:= \Omega^1(M)$ which we will consider as a Fréchet manifold modelled by itself. Throughout, we identify $T_\alpha \mf{M} \cong \mf{M}$ for all $\alpha \in \mf{M}$.

    $\mf{M}$ has a weakly symplectic form given by
    \[ \omega(\beta, \gamma) = \int_M \beta \wedge \gamma, \]
    and we will fix $\mf{M}$ to have Hamiltonian $H\colon\mf{M}\rightarrow \R$ given by 
    \[ H(\alpha) = \frac{1}{2}(\alpha|\alpha) =-\frac{1}{2}\int_M *\alpha \wedge \alpha. \]
    
    Define $\mf{N}:=H^{-1}(1)$. Then $\mf{N}\subset \mf{M}$ is a level set of contact type with contact form $\eta_\alpha(\beta) = \int_M \alpha \wedge \beta$.

\end{example}
\begin{proof}
    First we show that $\mf{N}$ is indeed a submanifold of $\mf{M}$. By \cite[Thm 27.11]{KrieglMichor97}, it suffices to find a local chart for each $\alpha$, from a closed subspace of $\Omega^1(M)$ into a neighbourhood $\alpha \in U \subset \mf{N}$.
    
    We proceed by essentially generalising orthographic projections for this ``unit sphere". Fix $\alpha_0 \in \mf{N}$. Define the map
    \begin{align*}
        \alpha\colon \left\{\beta \in \angles{\alpha_0}^\bot \ \middle| \ \int_M \beta \wedge *\beta < 1 \right\} \rightarrow \mf{N} \cap \{\alpha \ | \ (\alpha,\alpha_0)>0 \}\\
        \alpha(\beta) = \beta + \alpha_0 \sqrt{1-\int_M \beta \wedge *\beta},
    \end{align*}
    where $\bot$ is with respect to the inner product $(-|-)$. This map has inverse
    \[ \beta(\alpha) = \alpha - \alpha_0 \cdot \int_M \alpha \wedge *\alpha_0, \]
    and so it is a bijection between a subset of $\angles{\alpha_0}^\bot$ and an open subset of $\mf{N}$.
    It is easily checked that the domains of these maps are open, and that the maps themselves are indeed smooth, so that $\mf{N}$ is indeed a submanifold of $\mf{M}$.
    
    $X_H$ exists and is given by the map $ X_H \colon \alpha \mapsto -*\alpha$ since 
    \[ dH_\alpha (\beta) = \frac{d}{dt}\, \frac{1}{2}(\alpha + \beta t | \alpha + \beta t) = (\alpha | \beta) = - \int_M * \alpha \wedge \beta  = \omega(X_H, \beta).\]

    Define $Y\in \vect(M)$ as $Y_\alpha = \alpha$.

    $Y$ is transverse to $\mf{N}$ since $dH(Y) = -\int_M *\alpha \wedge \alpha = 1$ on $\mf{N}$, and 
    \[ d(\iota_Y \omega) = d\left( \int_M \alpha \wedge - \right) = \omega.\]
    
    Hence all of the assumptions of theorem \ref{thm: level sets of contact type} are satisfied, and $(\mf{N}, \iota_Y \omega|_{\mf{N}})$ is a contact manifold.
    
    The spaces $T_\alpha \mf{N}$, $\Hor_\alpha$, $\Ver_\alpha$ are as follows:
    \begin{align*}
        T_\alpha \mf{N} &= \ker dH = \left\{\beta \in \Omega^1(M) \ \middle | \ \int_M *\alpha \wedge \beta = 0 \right\} \\
        &= \angles{\alpha}^\bot, \\
        \Hor_\alpha &= \left\{\beta \in T_\alpha \mf{N} \ \middle| \ \int_M \alpha \wedge \beta = \int_M *\alpha \wedge \beta = 0 \right\} \\
        &= \angles{\alpha}^\bot \cap \angles{*\alpha}^\bot, \\
        \Ver_\alpha &= \angles{*\alpha}.
    \end{align*}
    We also have the following short exact sequence
    \[
    \begin{tikzcd}
        0 \arrow[r] & \angles{*\alpha} \arrow[r, "i", hook] & T_\alpha \mf{N} \arrow[r, "\pi", two heads] &  \arrow[r] \angles{\alpha}^\bot \cap \angles{*\alpha}^\bot & 0
    \end{tikzcd},
    \]
    where $i$ is the inclusion map and $\pi$ is the projection map given by
    \[ \pi\colon \beta \mapsto \beta-\left(\int_M *\alpha \wedge \beta \right) \alpha. \]
\end{proof}
\section{Contact Hamiltonian dynamics}
One of the main motivations for the study of contact geometry in finite dimensions is that it allows us to model dissipative and non-conservative physical systems. However, it is easy to think of many examples of such systems that are infinite-dimensional. For one, when a guitar string is plucked, we observe that it loses energy to its surroundings via sound energy, and heat energy.

We begin with a slight generalisation of Hamiltonian vector fields, and apply it to a toy example, describing its Hamiltonian flow. We end with the derivation of a Hamiltonian functional that describes the damped wave equation.

\subsection{Hamiltonian vector fields}
\begin{definition}[Contactomorphisms and contact vector fields]
    A diffeomorphism $F: (M,\eta) \rightarrow (N, \eta')$ is a \textit{contactomorphism} if it preserves the contact distributions induced by $\eta$ and $\eta'$ (and not, notably, the contact form itself), i.e. if the pushforward of the horizontal distribution $\Hor_M \subset TM$ induced by $\eta$ under $F$ is a sub-bundle of the horizontal distribution $\Hor_N \subset TN$ induced by $\eta'$. Equivalently, $F^*\eta = f\eta'$ for some $f\in\Cinf(M)$. We call $F$ a \textit{strict contactomorphism} if $F^*\eta = \eta'$.

    If $F_t, \ t \in (0,1)$ is a one-parameter family of diffeomorphisms $F_t: M\rightarrow M$, and each $F_t$ is a contactomorphism then ${F_t}_* \Hor = \Hor$. Equivalently, $F_t^*\eta = f_t\eta$ for some $f_t \in \Cinf(M)$.

    If we let $X$ be the vector field that represents an infinitesimal transformation by $F_t$ (so that $F_t$ is the flow of $X$) then we call $X$ a \textit{contact vector field}. Equivalently $X \in \vect(M)$ is \textit{contact} if 
    \[ \Lie_X \eta = f\eta \]
    for some $f \in \Cinf(M)$.
\end{definition}
The following definition is a straightforward generalisation of those found in \cite{Bravetti17,deLeon19, Montaldi24}, among others.
\begin{definition}[Hamiltonian and Hamiltonian vector field]\label{def: Hamiltonian v.f.}
    Given a contact vector field $X$ we associate to it the function $H_X = -\eta(X) \in \Cinf(M)$ called the \emph{Hamiltonian}.

    Given a function $H \in \Cinf(M)$ we call a vector field $X_H \in \vect(M)$ a \emph{Hamiltonian vector field} of $H$ if for some $f \in \Cinf(M)$,
    \begin{equation}\label{Hamiltonian v.f. conditions}
    \begin{cases}
        \eta(X_H) &= -H \\
        \Lie_{X_H}\eta &= f\eta.\\
    \end{cases}
    \end{equation}
\end{definition}
\begin{theorem}\label{Uniqueness of X_H}
    Let $H\in \Cinf(M)$. If a Hamiltonian vector field of $H$ exists then it is unique. Also, in the definition above, $f = -R(H)$.
\end{theorem}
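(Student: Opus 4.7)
The plan is to first determine the conformal factor $f$ by contracting the second defining equation with the Reeb vector field, and then to recognise that, with $f$ fixed, the two conditions on $X_H$ together determine $\flat(X_H)$, whereupon injectivity of $\flat$ from theorem \ref{thm: inf dim contact forms} gives uniqueness.

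First I would apply Cartan's magic formula to the Lie derivative condition to obtain
\[ \Lie_{X_H}\eta = \iota_{X_H} d\eta + d(\iota_{X_H}\eta) = \iota_{X_H} d\eta - dH, \]
using the first condition $\eta(X_H) = -H$. The second condition then becomes
\[ \iota_{X_H} d\eta = dH + f\eta. \]
To pin down $f$, I would contract this identity with the Reeb vector field $R$. The left-hand side becomes $d\eta(X_H, R) = -d\eta(R, X_H) = -(\iota_R d\eta)(X_H) = 0$ by the defining property of $R$, while the right-hand side gives $R(H) + f\,\eta(R) = R(H) + f$. Hence $f = -R(H)$.

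With this value of $f$ now forced by the conditions themselves, the pair of equations defining a Hamiltonian vector field of $H$ can be repackaged as
\[ \flat(X_H) = \iota_{X_H}d\eta + \eta(X_H)\eta = dH - R(H)\,\eta - H\eta, \]
which is a fixed 1-form depending only on $H$ and the contact structure. If $X_1$ and $X_2$ are both Hamiltonian vector fields of $H$, then they satisfy the same equation $\flat(X_i) = dH - R(H)\eta - H\eta$, so $\flat(X_1 - X_2) = 0$. By theorem \ref{thm: inf dim contact forms}(3) the bundle map $\flat$ is injective, so $X_1 = X_2$.

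There is no real obstacle here: the argument is a direct computation leveraging the two defining properties of the Reeb vector field, together with the injectivity of $\flat$ that is built into our definition of a contact form. The only subtlety worth stating explicitly is that the conformal factor $f$ is not a free parameter in the definition; it is forced by the conditions, which is precisely what makes uniqueness available even though definition \ref{def: Hamiltonian v.f.} introduces $f$ as an auxiliary unknown.
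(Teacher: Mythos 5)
Your proof is correct and follows essentially the same route as the paper: both determine $f=-R(H)$ by contracting the Lie derivative condition (via Cartan's formula) with the Reeb field, and both conclude uniqueness by rewriting the defining conditions as $\flat(X_H)=dH-(R(H)+H)\eta$ and invoking injectivity of $\flat$. No gaps.
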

\begin{proof}
    To prove the second statement, contract both sides of the second condition in equations \ref{Hamiltonian v.f. conditions} by $R$. Then
    \begin{align*}
        \Lie_{X_H}\eta(R) &= f\eta(R)\\
        d\eta(X_H, R) + d\eta(X_H)(R) &= f \\
        -dH(R) &= -R(H) = f.
    \end{align*}
    To prove the first, consider $\flat(X_H)$:
    \begin{align*}
        \flat(X_H) &= \iota_{X_H} d\eta + \eta(X_H)\eta \\
        & = \Lie_{X_H}\eta - d(\iota_{X_H}\eta) - H\eta  \\
        & = -R(H)\eta + dH - H\eta \\
        & = dH - (R(H)+H)\eta. \tag{$\ast$}
    \end{align*}
    By the injectivity of $\flat$, it is clear that if $X_H$ satisfies ($\ast$) then it is unique.
\end{proof}
\begin{remark}
    It follows from a simple calculation that 
    \[dH(Y) = d\eta(X_H, Y) + R(H)\eta(Y)\]
    and in particular, along the flow of $X_H$ (where it exists)
    \[\frac{d}{dt}H = X_H(H) = dH(X_H) = -R(H)H.\]
    Now suppose that there exists a chart in which the Reeb field is constant and equal to $\frac{\dee}{\dee z}$ for some co-ordinate $z$ (as in section \ref{sec: trivial bundles}). If $H$ is a Hamiltonian
    \[H = H_0 + f(z)\]
    where $H_0$ is independent of $z$ then the above becomes 
    \[\dot{H}(t) = f'(z(t)) H(t)\]
    which is seperable (enough), so that
    \[ H(t) = H(0) \cdot \exp{\left( -\int_0^t f'(z(t'))\, dt' \right)}.\]

    In the simplest examples, $f'(z)\equiv \kappa$ and 
    \[H(t) = H(0)e^{-\kappa t}.\] 
\end{remark}

In finite dimensions (or in the strongly non-degenerate case), the above proof would also guarantee existence of $X_H$, but here it only guarantees uniqueness. 
Any contact vector field $X$ has some Hamiltonian given by $H_X := -\eta(X)$, but as opposed to the finite-dimensional case, 
a Hamiltonian has a corresponding vector field iff $dH$ falls in the image of $\flat\colon \vect(M)\rightarrow \Omega^1(M)$. In \cite{DiezRudolph24} the authors give a linear Hamiltonian, $h\colon E\rightarrow \R$ as an example, then $X_h$ only exists as long as $h$ is in the image of $\flat$.

\subsection{A contact Hamiltonian system on a Fréchet space}
Let $(M,g)$ be a closed and orientable 2-dimensional Riemannian manifold. Recall (from example \ref{ex: R times Omega^1 M}) that $(\mathfrak{M}, \eta)$ is a contact manifold with $\mathfrak{M} = \R \times \Omega^1(M)$ and 
\[\eta_{(z, \alpha)} = dz + \frac{1}{2}\int_M \alpha \wedge -.\]

Now let $\kappa \geq 0$ and consider the Hamiltonian function $H \in \Cinf(\mathfrak{M})$,
\begin{align*}
H(z,\alpha) &= \kappa z + \frac{1}{2}(\alpha|\alpha)\\
&= \kappa z - \frac{1}{2}\int_M *\alpha \wedge \alpha.
\end{align*}

Our objective is to calculate the Hamiltonian vector field of $H$. 
To this end we write $X_H = \zeta_H\deez + \beta_H$ where each component varies over $\mathfrak{M}$. Recall that by theorem \ref{Uniqueness of X_H}, 
\[\flat(X_H) = dH - (R(H)+H)\eta .\]
Contracting the left-hand side of this equation by some $Y = \theta \deez + \gamma \in \vect(\mathfrak{M})$, we obtain 
\begin{align*}
    \flat(X_H)(Y) &= d\eta(X_H, Y) + \eta(X_H)\eta(Y) \\
    &= \int_M \beta_H \wedge \gamma + \eta \left( X_H \right) \left( \theta + \frac{1}{2}\int_M \alpha \wedge \gamma \right) \\
    &= \theta \cdot \eta \left( X_H \right) + \int_M \left( \beta_H + \frac{1}{2}\eta\left( X_H \right)\alpha \right)\wedge \gamma. \tag{$*$}
\end{align*}

The right-hand side requires more work. First, we calculate 
\[ dH_{(z,\alpha)} \left( \theta \deez + \gamma \right) = \kappa \theta - \int_M *\alpha \wedge \gamma, \]
and then since $R = \deez$ we have $R(H) = dH \left(\deez\right) = \kappa$, so

\begin{align*}
    (dH-(R(H)+H)\eta)&\left( \theta \deez + \gamma \right) \\ 
    &= \kappa\theta - \int_M *\alpha \wedge \gamma -\left(\kappa + H\right) \left( \theta + \frac{1}{2}\int_M \alpha \wedge \gamma \right) \\
    &= \theta \left( -H\right) + \int_M{\left(-*\alpha -\frac{1}{2}\left( \kappa + H \right)\alpha \right)\wedge \gamma}. \tag{$**$}
\end{align*}
Equating $(*)$ and $(**)$, we obtain the following pair of equations:
\begin{align*}
    \eta(X_H) &= -H \\
    \beta_H + \frac{1}{2}\eta\left(X_H\right)\alpha &= -*\alpha -\frac{1}{2}\left( \kappa + H \right)\alpha.
\end{align*}
Since $\eta(X_H) = -H$, we can rearrange the second equation into
\[ \beta_H = -*\alpha - \frac{1}{2}\kappa \alpha, \]
and the first then fixes $\zeta_H = -\kappa z$, so
\[ X_H = -\kappa z \deez + \left(-*\alpha -\frac{1}{2}\kappa\alpha\right).\]
The reader is invited to check that this satisfies the conditions of definition \ref{def: Hamiltonian v.f.}.

\subsubsection{Hamiltonian flow of $(\mathfrak{M}, \eta, H)$}
Now consider the differential equation 
\[ (\dot{z}, \dot{\alpha}) = X_H,\]
which is really two differential equations:
\begin{align}
    \dot{z} &= -\kappa z \label{equ: 1-form example z-component} \\
    \dot{\alpha} &= -*\alpha -\frac{1}{2}\kappa \alpha. \label{equ: 1-form example alpha-component}
\end{align}
Equation \ref{equ: 1-form example z-component} is solved by 
\[ z(t) = z(0)\cdot e^{-\kappa t}\]
and it doesn't directly contribute to any of the dynamics on $\Omega^1(M)$.

To solve equation \ref{equ: 1-form example alpha-component}, let  $q \in M$ and $x^1, y^1$ be positively oriented local co-ordinates for $M$ centred on $q$. Then $ \vol_q = \sqrt{\det g_{ij}} \ dx^1 \wedge dx^2$ and 
\begin{align*}
    *dx^1 &=\sqrt{\det g_{ij}} (-g^{12} dx^1 + g^{11}dx^2), \\
    *dx^2 &=\sqrt{\det g_{ij}} (-g^{22} dx^1 + g^{12}dx^2) .
\end{align*}
 Note that in two dimensions, $** = -\id$, so we may think of $*$ as an almost complex structure on $\Omega^1(M)$.

Locally $\alpha = \alpha_i dx^i$ and so we can rewrite equation \ref{equ: 1-form example alpha-component} as two coupled equations in $\alpha_1, \alpha_2$ as
\[
\begin{cases}
    \dot{\alpha}_1 = \sqrt{\det g_{ij}} \ (-g^{12} \alpha_1 - g^{22}\alpha_2) -\frac{1}{2}\kappa \alpha_1 \\
    \dot{\alpha}_2 = \sqrt{\det g_{ij}} \ (g^{11}\alpha_1 + g^{12}\alpha_2) -\frac{1}{2}\kappa \alpha_2, 
\end{cases}
\]
or, as
\begin{align*}  
\begin{pmatrix}
    \dot{\alpha}_1 & \dot{\alpha}_2
\end{pmatrix} & = -
\begin{pmatrix}
    \alpha_1 & \alpha_2
\end{pmatrix} 
\begin{pmatrix}
    -g^{12} & g^{11} \\
    -g^{22} & g^{12}
\end{pmatrix} \sqrt{\det g_{ij}} - \frac{1}{2} \kappa \begin{pmatrix}
    \alpha_1 & \alpha_2
\end{pmatrix}\\
& = - \begin{pmatrix}
    \alpha_1 & \alpha_2
\end{pmatrix} \braces{ 
\begin{pmatrix}
    g^{11} & g^{12} \\
    g^{12} & g^{22}
\end{pmatrix}
\begin{pmatrix}
    0 & \sqrt{\det g_{ij}} \\
    -\sqrt{\det g_{ij}} & 0
\end{pmatrix}
- \begin{pmatrix}
    \kappa/2 & 0 \\
    0 & \kappa/2
\end{pmatrix}
} \\
&= \begin{pmatrix}
    \alpha_1 & \alpha_2
\end{pmatrix}
\begin{pmatrix}
    g^{12} \sqrt{\det g_{ij}} - \kappa/2 & -g^{11}\sqrt{\det g_{ij}}\phantom{-\kappa/2} \\
    g^{22} \sqrt{\det g_{ij}}\phantom{-\kappa/2} & -g^{12}\sqrt{\det g_{ij}}-\kappa/2
\end{pmatrix}.
\end{align*}

The matrix that appears has been postmultiplied, and not premultiplied, because we are working with covectors and not vectors. This matrix has determinant
\begin{align*}
    \left( g^{12} \sqrt{\det g_{ij}} - \frac{\kappa}{2} \right) & \left( -g^{12}\sqrt{\det g_{ij}}-\frac{\kappa}{2} \right) - \left( -g^{11}\sqrt{\det g_{ij}}\right) \left( g^{22} \sqrt{\det g_{ij}}\right) \\
     &= g^{11}g^{22} \det g_{ij} - g^{12}g^{21} \det g_{ij}+\frac{\kappa^2}{4} \\
     &= \det g_{ij} \det g^{ij} + \frac{\kappa^2}{4} \quad = \det g_{ij}g^{jk} + \frac{\kappa^2}{4}\\
     &  = 1+ \frac{\kappa^2}{4}.
\end{align*}

Using the subsitution $\kappa \mapsto \kappa+2\lambda$ we can quickly write down the characteristic polynomial in $\lambda$ and hence the eigenvalues,
\begin{align*}
    \lambda^2 + \kappa \lambda + \left( 1+ \frac{\kappa^2}{4}\right) = 0 \quad \Rightarrow \lambda = -\frac{\kappa}{2} \pm i.
\end{align*}

We can hence write down the solution to equ. \ref{equ: 1-form example alpha-component} in local coordinates as 

\[ \begin{pmatrix}
    \alpha_1 (t) & \alpha_2 (t)
\end{pmatrix}  = \begin{pmatrix}
    \alpha_1 (0) & \alpha_2 (0)
\end{pmatrix}  \cdot
\begin{pmatrix}
    \cos t & - \sin t \\
    \sin t & \phantom{-}\cos t
\end{pmatrix} \cdot e^{-\kappa t /2}.
\]

Physically, for some $x \in M$, one could imagine the value of $\alpha_x(t)$ as a covector rotating with angular frequency $2\pi$ in the $T_x^*M$ plane, while simultaneously decaying in ``length'' at a rate of $\kappa/2$. 

\begin{remark}
    Given a point $q\in M$, this infinite-dimensional contact Hamiltonian system can be simplified to a 3-dimensional system $(\R^3\cong \R \times T^*M, \eta_{\text{std}}, h)$, where
    \[ h(z,x,y) = \kappa z + \frac{1}{2} (x^2+ y^2),\]
    the damped harmonic oscillator.
    Essentially the system on $\mathfrak{M}$ simultaneously solves a copy of this system at each $q \in M$. We conjecture that this relates to contact reduction on this system: consider the action on $\mathfrak{M}$ of the group of volume-preserving diffeomorphisms of $M$. Perhaps this reduced space is the copy of $\R^3$ above, but this is beyond the scope of this paper.
\end{remark}

\subsection{The Damped Wave Equation} \label{sec: damped wave equation}
Recall that in order to go from the simple harmonic oscillator as a Hamiltonian system, to the damped harmonic oscillator as a contact Hamiltonian system we simply pass from the phase space $T^*\R$ with Hamiltonian
\[H = \frac{1}{2}q^2 + \frac{1}{2}p^2\]
to the extended phase space $\R \times T^*\R$ with Hamiltonian 
\[H = \frac{1}{2}q^2 + \frac{1}{2}p^2 + \kappa z.\]

One obvious infinite-dimensional analogue to the simple harmonic oscillator is an infinitely long string with height function $u$ whose dynamics obey the wave equation
\[\frac{\dee^2 u}{\dee t^2} = \frac{\dee^2 u}{\dee x^2}.\]

Let $E=W^2_2(\R)$ be the space of $L^2$ functions on $\R$ whose first and second derivatives are also $L^2$. We consider $T'E$ to have the canonical symplectic structure. This is a Banach space; refer to \cite{AdamsFournier}. We may reproduce solutions to the wave equation as Hamiltonian flows induced by 
\[ H = \int_\R \frac{1}{2}\left( \frac{\dee u}{\dee x} \right)^2 + \frac{1}{2}p^2 \]
where $q=u$ and $p = \frac{\dee u}{\dee t}$.

One would then hope to use the same trick as above (extending the phase space, replacing the canonical symplectic structure with the canonical contact structure, and adding $\kappa z$ to the Hamiltonian) to reproduce solutions to the damped wave equation,
\[ \frac{\dee ^2 u}{\dee t^2} = \frac{\dee^2 u}{\dee x^2} - \kappa\frac{\dee u}{\dee t}, \]
as the flow of a contact Hamiltonian system.

Indeed, one can.

Consider the space $\R \times E \times E'$. We will write points of the space as $(z,q,p)$ and tangent vectors as $(\hat{z}, \hat{q}, \hat{p})$ or $(\tilde{z}, \tilde{q},\tilde{p})$. We use the same contact structure as in the $L^2$ case, namely,
\[ \eta_{(z,q,p)}(\hat{z},\hat{q},\hat{p}) = \hat{z} - \int_\R p \, \hat{q}\]
for which we also have 
\[ d\eta ( (\hat{z}, \hat{q}, \hat{p}) ,\, (\tilde{z}, \tilde{q},\tilde{p}) ) = \int_\R  \hat{q} \, \tilde{p} - \hat{p}\, \tilde{q}. \]
We have supressed the subscript in the second expression because $d\eta$ is constant with respect to the natural parallelisation of $\R \times E \times E'$. $W^2_2(\R)$ is a closed subspace of $L^2(\R)$, and although the topologies on the two spaces are induced by different norms, injectivity of $\flat$ on $L^2(\R)$ implies the same property for $W^2_2(\R)$ and we can check that $\im \flat \subset \R \times W^2_2(\R) \times W^2_2(\R)$. 

We then fix
\[ H = \kappa z + \int_\R \frac{1}{2}\left( \frac{\dee q}{\dee x} \right)^2 + \frac{1}{2}p^2 \ dx \]
to be the Hamiltonian of our system. Clearly, $R(H) = \frac{\dee H}{\dee z} = \kappa$. To apply theorem \ref{Uniqueness of X_H} we also need an expression for $dH$. By differentiating \[\frac{d}{dt}H(z+\hat{z}t,\, q+ \hat{q}t ,\, p+ \hat{p}t)\]
we obtain the expression
\[ dH_{(z,q,p)}(\hat{z}, \hat{q}, \hat{p}) = \kappa \hat{z} + \int_\R \frac{\dee q}{\dee x} \frac{\dee \hat{q}}{\dee x} + p\,\hat{p} \ dx, \]
which using integration by parts can be written as
\[ dH_{(z,q,p)}(\hat{z}, \hat{q}, \hat{p}) = \kappa \hat{z} + \int_\R -\frac{\dee^2 q}{\dee x^2} \, \hat{q} + p\,\hat{p} \ dx \]
since the derivatives must tend to 0 at infinity in both directions.

Then, using the expression $\flat(X_H) = dH -(R(H)+H)\eta$ and writing 
\[X_H = (\hat{z}_H, \hat{q}_H, \hat{p}_H)\]
we obtain two expressions for $\flat(X_H)(Y)$ where $Y=(\tilde{z}, \,\tilde{q},\, \tilde{p})$:

\begin{align*}
    \flat(X_H)(Y) &= \eta(X_H)\tilde{z} + \int_\R \left( -\hat{p}_H - p\cdot\eta(X_H) \right) \tilde{q} + \int_\R \hat{q}_H\cdot \tilde{p} \\
    &= -H\cdot\tilde{z} + \int_\R \left( -\frac{\dee^2 q}{\dee x^2}+(\kappa + H)p \right)\tilde{q} + \int_\R p \cdot \tilde{p}.
\end{align*}
For these to be equal we simply require that each term is equal for all $\tilde{z}$, $\tilde{q}$ and $\tilde{p}$. By the Riesz representation theorem, this is equivalent to the following system of equations:
\[\begin{cases}
    \eta(X_H) = -H \\
    \hat{q}_H = p \\
    \hat{p}_H = \frac{\dee^2 q}{\dee x^2} - \kappa p.
\end{cases}\]
An integral curve of $X_H$ must satsify $(\dot{z}, \dot{q}, \dot{p}) = (\hat{z}_H, \hat{q}_H, \hat{p}_H)$ and so combining this with the above, we find that $q(t)$ must satisfy
\[ \frac{\dee^2 q}{\dee t^2} = \frac{\dee^2 q}{\dee x^2} - \kappa \frac{\dee q}{\dee t}, \]
that is, it must be a solution to the damped wave equation.

\section{Cosymplectic and cocontact structures}
\subsection{Cosymplectic manifolds}
Theorem \ref{thm: inf dim contact forms} is in fact a special case of the following. The terminology  is borrowed from \cite{Albert89} (\emph{structure de presque contact}).
\begin{theorem}[Almost contact structure]
    Let $M$ be a manifold modelled on some lcTVS, $E$. Let $\theta \in \Omega^1(M)$ and $\omega\in \Omega^2(M)$. Assume that $\omega$ is degenerate. Define $\Hor : = \ker \theta$ and $\Ver := \ker \omega$. TFAE:
    \begin{enumerate}
        \item $TM = \Hor \oplus \Ver$.
        \item $\omega|_\Hor$ is weakly non-degenerate.
        \item The map \[\flat_{\theta,\omega}\colon \vect(M)\rightarrow \Omega^1(M)\]
        \[ \flat_{\theta,\omega}(X) = \iota_X \omega + \theta(X)\theta\]
        is an injective map (homomorphism of $\Cinf(M)$-modules).
    \end{enumerate}
\end{theorem}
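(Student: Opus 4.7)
The plan is to mimic the cyclic implication chain $(1)\Rightarrow(2)\Rightarrow(3)\Rightarrow(1)$ used in the proof of Theorem \ref{thm: inf dim contact forms}, since the only features of $\eta,d\eta$ that were actually used there were the antisymmetry of $d\eta$ and the formal structure of the map $\flat$. The abstract 2-form $\omega$ (which is automatically antisymmetric) plays the role of $d\eta$ and $\theta$ plays the role of $\eta$, so the arguments transport verbatim with $\omega = d\theta$ nowhere exploited.

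For $(1)\Rightarrow(2)$, the pointwise identity
\[\ker\bigl(\omega_p|_{\Hor_p}\bigr) \;=\; \ker \omega_p \cap \Hor_p \;=\; \Ver_p \cap \Hor_p \;=\; \{0\}\]
is immediate from the direct sum hypothesis. For $(2)\Rightarrow(3)$, suppose $X\in\vect(M)$ satisfies $\flat_{\theta,\omega}(X)=0$, so that $\iota_X\omega = -\theta(X)\theta$. Contracting both sides with $X$ and using $\omega(X,X)=0$ (antisymmetry) yields $\theta(X)^2 = 0$, whence $\theta(X)\equiv 0$ and $X$ is a section of $\Hor$. Substituting back gives $\iota_X\omega = 0$, so $X$ is simultaneously a section of $\Ver$. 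Weak non-degeneracy of $\omega|_\Hor$ then forces $X=0$, exactly as in the proof of $(2)\Rightarrow(3)$ in Theorem \ref{thm: inf dim contact forms}.

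For $(3)\Rightarrow(1)$, injectivity of $\flat_{\theta,\omega}$ gives $\Hor\cap\Ver=\{0\}$ by feeding any vector in the intersection into $\flat_{\theta,\omega}$ and observing that both summands vanish. To upgrade this to the direct sum decomposition $TM = \Hor \oplus \Ver$, I would invoke the codimension lemma exactly as before, using that $\Hor$ has codimension $1$ (taking the implicit convention, as for contact forms, that $\theta$ is nowhere vanishing so that $\Hor$ is a genuine hyperplane distribution) and that $\Ver \neq 0$ by the standing hypothesis that $\omega$ is degenerate.

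The main obstacle, such as it is, is simply verifying that no step in the original proof silently used the identity $\omega = d\eta$; a quick audit shows that only antisymmetry of the 2-form and bilinearity of the pairing were used, both of which hold for any $\omega\in\Omega^2(M)$. The final codimension step is where the care is required, since here we have no analogue of the Reeb field guaranteeing a preferred vertical direction; but the codimension lemma settles this once we have $\Hor\cap\Ver=\{0\}$ together with the rank information on $\Hor$ and the non-triviality of $\Ver$.
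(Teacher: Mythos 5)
Your proposal is correct and matches the paper exactly: the paper's own proof consists of the single remark that the argument of Theorem \ref{thm: inf dim contact forms} carries over verbatim with $d\eta$ replaced by $\omega$ and $\eta$ replaced by $\theta$, which is precisely the transport (and audit of where exactness was never used) that you carry out. Nothing further is needed.
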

\begin{proof}
    Exactly the same as in theorem \ref{thm: inf dim contact forms}, but replacing $d\eta$ with $\omega$ and $\eta$ with $\theta$.
\end{proof}

\begin{definition}[Almost contact structure]
    If $\theta \in \Omega^1(M)$ and $\omega \in \Omega^2(M)$ satisfy any (and hence all) of the above conditions then we call the pair $(\theta, \omega)$ an \emph{almost contact structure} on $M$ and $(M, \theta, \omega)$ an \emph{almost contact manifold.}
\end{definition}

The above generalises the property that, for a 1-form $\theta$, and a 2-form $\omega$ on a $2n+1$-dimensional manifold $M$, $\theta \wedge \omega^n > 0$.
In particular, when $\theta = \eta$ and $\omega = d\eta$ for some $\eta \in \Omega^1(M)$, then we recover theorem \ref{thm: infinite-dimensional analogue of definition} and $(M, \eta, d\eta)$ is a contact manifold.

Otherwise, when $\theta$ and $\omega$ are both closed, then we call $(M, \theta, \omega)$ a \emph{cosymplectic manifold}, in line with the terminology used in finite dimensions. Refer to \cite[\textsection 5]{deLeonIzquierdo24}, for example. 

\begin{definition}
    The \emph{Reeb field} is defined similarly to before. It is the unique vector field $R \in \vect(M)$ for which $\iota_R \theta = 1$ and $\iota_R \omega = 0$.
\end{definition}

\begin{definition}
    Let $(M,\theta,\omega)$ be a cosymplectic manifold, and $H\in \Cinf(M)$. Then the \emph{evolution vector field} associated to $H$ is defined by the following expression:
    \[\flat(\E_H) = dH + (1-R(H))\theta.\]
    We call $(M, \theta, \omega, H)$ a Hamiltonian system. Any integral curve of $\E_H$ describes the evolution of the system under the Hamiltonian $H$ given some initial condition $p \in M$.
\end{definition}

In finite dimensions, cosymplectic manifolds can be used to model time-dependent Hamiltonian mechanics, as in the following example. In the context of such systems, the Reeb field can be thought of as defining the flow of time, in other words, it tells the ``arrow of time'' where to fly. With the following example we show that this extends to infinite dimensions as well.

\subsubsection{The wave equation with a source term}
\begin{example}
    Define $P:= W_2^2(\R)$ and $Q = W_2^2(\R)' \cong W_2^2(\R)$. Consider $M$ to be the Banach space $\R \times Q \times P$ which we will treat as a Banach manifold modelled by itself. We will write a point in $M$ as $(t, q, p)$; physically $t\in \R$ represents time, and $q$ and $p$ are functions on a 1-dimensional string representing respectively the height of each point and the momentum of an infinitesimal mass located at that point.

    Let $\theta = dt \in \Omega^1(M)$ and $\omega \in \Omega^2(M)$ be the canonical symplectic structure when restricted to $T'W^2_2(\R) \cong Q\times P$, and then parallelised with respect to the vector space structure of $M$:
    \[ \omega_{(t,q,p)} ((\hat{t}, \hat{q}, \hat{p}),\, (\tld{t}, \tld{q}, \tld{p})) = \int_\R \tld{p}\hat{q} - \hat{p}\tld{q}.\]
    Alternatively we can define $\omega$ as the pullback of the canonical symplectic form $\omega_0 \in \Omega^2(P \times Q)$ via the projection $\pi \colon \R \times P \times Q \rightarrow P\times Q$.

    With $\theta$ and $\omega$ defined as above, the map
    \[ \flat_{\theta,\omega}\colon X \mapsto \iota_X \omega + \theta(X)\theta \]
    is an injective homomorphism $\vect(M) \rightarrow \Omega^1(M)$. The Reeb field in this case is $R = \frac{\dee}{\dee t}$. 

    Now set the Hamiltonian to be
    \[ H = U(t,q,p) + \int_\R \frac{1}{2}\left( \frac{\dee q}{\dee x} \right)^2 + \frac{1}{2} p^2 \, dx \]
    for some \emph{source potential} $U \in \Cinf(\R \times Q \times P)$. Note the similarity to the Hamiltonian for the ordinary wave equation, apart from the added term.

    We calculate 
    \[ dH_{(t,q,p)}(\tld{t}, \tld{q}, \tld{p}) = dU_{(t,q,p)}(\tld{t}, \tld{q}, \tld{p}) + \int_\R - \frac{\dee^2 q }{\dee x ^2} \tld{q} + p \, \tld{p} \ dx. \]

    We briefly pay special attention to $dU$. Since $dU_{(t,q,p)}$ is a linear function $\R \times Q \times P \rightarrow \R$ we may write it as the sum of restrictions onto $\R$, $Q$ and $P$:
    \[ dU_{(t,q,p)} = dU^t + dU^Q + dU^P\]
    or 
    \[dU_{(t,q,p)}(\tld{t}, \tld{q}, \tld{p}) = \tld{t} \cdot \left. \frac{\dee U}{\dee t} \right|_{q,p} + dU^Q_{(t,q,p)}(\tld{q}) + dU^P_{(t,q,p)}(\tld{p}).\]
    For now, we ignore the $dU^t$ term (it is about to be cancelled out by another term) but $dU^Q_{(t,q)}$ is a bounded linear map $Q \rightarrow \R$ and so by the Riesz representation theorem we can write this as
    \[ dU^Q_{(t,q,p)}(\tld{q}) = \int_\R f^U_{(t,q,p)}\cdot \tld{q} \ dx \]
    for some map
    \[f_{(-)}^U \colon (t,q,p) \mapsto f_{(t,q,p)}^U \in W^2_2(\R).\]
    Similarly we write 
    \[ dU^P_{(t,q,p)}(\tld{p}) = \int_\R g^U_{(t,q,p)}\cdot \tld{p} \ dx\]
    for some $g^U_{(-)}$.

    Hence (now supressing coordinates)
    \[dH(\tld{t}, \tld{q}, \tld{p}) = \frac{\dee U}{\dee t} dt(\tld{t}) + \int_\R \left(f_{ (t,q,p)}^U -\frac{\dee ^2 q}{\dee x^2}   \right) \tld{q} + \left( g^U_{(t,q,p)} + p \right) \, \tld{p} \ dx \]

    Now, for $\flat(\E_H)$, calculate
    \begin{align*}
        \flat(\E_H) & = dH + (1-R(H))\theta \\
        &= dH - \frac{\dee H}{\dee t} dt + dt, \\
    \end{align*}
    and noting that $\frac{\dee H}{\dee t} = \frac{\dee U}{\dee t}$,
    \begin{align*}
        \flat(\E_H) (\tld{t}, \tld{q}, \tld{p}) &= \cancel{\frac{\dee U}{\dee t} dt(\tld{t})} + \int_\R \left(f_{ (t,q,p)}^U -\frac{\dee ^2 q}{\dee x^2}   \right) \tld{q} + \left( g^U_{(t,q,p)} + p \right) \, \tld{p} \ dx - \cancel{\frac{\dee U}{\dee t} dt(\tld{t})} + dt \\
        &= \int_\R \left(f_{ (t,q)}^U -\frac{\dee ^2 q}{\dee x^2}   \right) \tld{q} + \left( g^U_{(t,q,p)} + p \right) \, \tld{p} \ dx + \tld{t}
    \end{align*}
    
    Write $\E_H = (\hat{t}_H, \hat{q}_H, \hat{p}_H)$, and then to solve for $\E_H$ we ``compare coefficients'' between the above expression and 
    \[\flat(\E_H)(\tld{t}, \tld{q}, \tld{p}) = \int_\R -\hat{p}_H \tld{q} + \hat{q}_H \tld{p} \ dx + \hat{t}_H \tld{t} \]
    to obtain
    \begin{align*}
        \hat{q}_H &= p + g^U_{(t,q,p)}  , & \hat{p}_H &= \frac{\dee ^2 q}{\dee x^2} -f_{(t,q,p)}^U, & \hat{t}_H &= 1.
    \end{align*}
    Solving for the flow ($\dot{t} = \hat{t}_H$, $\dot{q} = \hat{q}_H$, $\dot{p} = \hat{p}_H$) gives 
    \[ \frac{\dee^2 q}{\dee t^2} = \frac{\dee^2 q}{dx^2} - f^U_{(t,q,p)}+ \frac{\dee}{\dee t} g^U_{(t,q,p)}  \]
    which describes a wave equation with a source. 

    Note that as $f_{(-)}^U$ and $g^U_{(-)}$ depend globally on the extended phase space, this is in fact more general than the normal source term. One obvious case is when $U$ is given by the following, for some $u(t) = u(x,t)$:
    \[U(t,q) = \frac{1}{2}\int_\R (q-u(t))^2 \ dx.\]
    This is independent of $p$, so $g^U_{(-)} \equiv 0$ and
    \[ dU^Q_{(q,t)}(\hat{q}) = \int_\R 2 (q-u(t)) \cdot \hat{q} \ dx. \]
    Hence $f^U_{(t,q)} = (q-u(t))$ and 
    \[ \frac{\dee^2 q}{\dee t^2} = \frac{\dee^2 q}{dx^2} - q + u(t).\]

    The more familiar source term can be obtained by setting 
    \[U(q,t) = \int_\R q \cdot u(t) \ dx \]
    in which case 
    \[dU^Q_{(q,t)}(\hat{q}) = \int_\R u(t) \cdot \hat{q} \quad \Rightarrow \quad f^U_{(t,q)} = u(t) \]
    and 
    \[ \frac{\dee^2 q}{\dee t^2} = \frac{\dee^2 q}{dx^2} + u(t).\]
\end{example}

\subsubsection{A cosymplectic Darboux theorem}
Let $(M,\theta, \omega)$ be a cosymplectic manifold and assume that $R$ has a unique flow, $\Fl^t$. Clearly $\omega$ is invariant under $\Fl^t$ because 
\[\Lie_R \omega = \iota_R d\omega + d \iota_R \omega = 0.\]
By Theorem 2 of \cite{Teichmann01}, since our vertical distribution is of rank 1, and hence involutive, we can write a Frobenius chart in which $R$ is constant. Then the existence of Darboux charts on open subsets of the whole manifold can be reduced to the question of whether the Darboux theorem holds on any leaf of the above foliation.

\subsection{Cocontact manifolds}
In finite dimensions, cocontact manifolds combine the properties of contact and cosymplectic manifolds. That is, we can use them to model both time-dependence and dissipation simultaneously. Here we must generalise statements of the form
\[\theta \wedge \eta \wedge (d\eta)^n > 0\]
from $2n+2$-dimensional manifolds to infinite-dimensional manifolds.

\begin{theorem}[Cocontact structure]
    Let $M$ be a manifold and $\theta, \eta$ be 1-forms on $M$ such that $\theta$ is closed. Define $\Hor_t:= \ker \theta$, $\Hor_z = \ker \eta$ and $\Hor_{tz} = \Hor_t \cap \Hor_z$. Further define $\Ver := \ker d \eta$ and then $\Ver_t:= \Ver \cap \Hor_z$ and $\Ver_z:= \Ver \cap \Hor_t$. We will also assume that $d\eta|_{\Hor_t}$ and $d\eta_{\Hor_z}$ are degenerate on their respective domains. TFAE:
    \begin{enumerate}
        \item $TM = \Hor_{tz} \oplus \Ver_t \oplus \Ver_z$. 
        \item $(\Hor_{tz}, d\eta|_{\Hor_{tz}})$ is a weakly symplectic distribution.
        \item The map $\flat_{\theta,\eta}\colon v \mapsto  \theta(v)\theta + \iota_v d\eta + \eta(v)\eta$ is an injective bundle morphism $TM \rightarrow T'M$.
    \end{enumerate}
\end{theorem}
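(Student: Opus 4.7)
The strategy is to mirror the proof of Theorem \ref{thm: inf dim contact forms} and establish the cycle $(1) \Rightarrow (2) \Rightarrow (3) \Rightarrow (1)$. The definitions of $\Hor_{tz}$, $\Ver_t$, $\Ver_z$ have been rigged so that each implication is a direct analogue of the contact case, with the extra bookkeeping of two Reeb-type directions instead of one.

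For $(1) \Rightarrow (2)$, I would take $v \in \Hor_{tz}$ with $\iota_v d\eta|_{\Hor_{tz}} = 0$, decompose an arbitrary $u \in TM$ as $u = u_{tz} + u_t + u_z$ via the direct sum hypothesis, and observe that $d\eta(v, u) = d\eta(v, u_{tz}) = 0$ because $u_t, u_z \in \Ver$. This places $v$ in $\Hor_{tz} \cap \Ver$, which the direct sum, together with $\Ver_t, \Ver_z \subset \Ver$, forces to be trivial, giving $v = 0$. For $(2) \Rightarrow (3)$, suppose $\flat_{\theta,\eta}(v) = 0$; contracting with $v$ kills $d\eta(v,v)$ by antisymmetry and leaves $\theta(v)^2 + \eta(v)^2 = 0$, so $v \in \Hor_{tz}$. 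The defining equation then reduces to $\iota_v d\eta = 0$, placing $v$ in $\ker(d\eta|_{\Hor_{tz}}) = \{0\}$ by weak non-degeneracy.

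For $(3) \Rightarrow (1)$, the three pairwise intersections $\Hor_{tz} \cap \Ver_t$, $\Hor_{tz} \cap \Ver_z$, $\Ver_t \cap \Ver_z$ are trivial because any vector in one of them makes all three terms of $\flat_{\theta,\eta}(v)$ vanish, and injectivity does the rest. For the spanning property I would invoke the codimension lemma in the same spirit as the contact proof: the degeneracy hypotheses on $d\eta|_{\Hor_t}$ and $d\eta|_{\Hor_z}$ furnish non-trivial rank-$1$ contributions in $\Ver_t$ and $\Ver_z$ respectively, and these, together with $\Hor_{tz}$ of codimension $2$, exhaust $TM$.

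The main obstacle is the surjectivity step of $(3) \Rightarrow (1)$: the pairwise intersection check is a one-line computation, but establishing $\Hor_{tz} + \Ver_t + \Ver_z = TM$ genuinely requires extracting two independent Reeb-like directions from the degeneracy hypotheses and then pinning down ranks via the codimension lemma. A close second is the subtlety in $(1) \Rightarrow (2)$, where one must argue that $\Hor_{tz} \cap \Ver = \{0\}$ — morally implied by the direct sum in $(1)$ but needing a short argument using the rank structure of $\Ver_t$ and $\Ver_z$ to rule out stray elements of $\Ver$ sitting inside $\Hor_{tz}$.
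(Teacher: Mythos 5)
Your proposal is correct and follows essentially the same route as the paper: the cycle $(1)\Rightarrow(2)\Rightarrow(3)\Rightarrow(1)$, with the contraction-by-$v$ trick for $(2)\Rightarrow(3)$ and the codimension lemma plus the degeneracy hypotheses furnishing the two Reeb-type directions for $(3)\Rightarrow(1)$. The only divergence is cosmetic: where you decompose an arbitrary $u$ to identify $\ker(d\eta|_{\Hor_{tz}})$ with $\Hor_{tz}\cap\Ver$ in $(1)\Rightarrow(2)$, the paper gets triviality directly from the set identity $\Hor_{tz}\cap\Ver = \Hor_{tz}\cap\Ver_t\cap\Ver_z$, so no appeal to the rank structure of $\Ver_t$ and $\Ver_z$ is needed there.
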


\begin{proof}
    We prove $(1) \Rightarrow (2) \Rightarrow (3) \Rightarrow (1)$.

    $(1)\Rightarrow(2)$: Assume that $TM = \Hor_{tz} \oplus \Ver_t \oplus \Ver_z$. Then 
    \begin{align*}
        \braces{0} & = \Hor_{tz} \cap \Ver_t \cap \Ver_z \\
        & = \Hor_{tz} \cap (\Ver \cap \Hor_z) \cap (\Ver \cap \Hor_t) \\
        & = \Hor_{tz} \cap \Ver \\
        & = \Hor_{tz} \cap \ker d\eta \\
        & = \ker d\eta|_{\Hor_{tz}}
    \end{align*}
    and hence $(\Hor_{tz}, d\eta|_{\Hor_{tz}})$ is a weakly symplectic distribution.

    $(2)\Rightarrow(3)$: Assume that $d\eta|_{\Hor_{tz}}$ is weakly non-degenerate, and let $X \in \vect(M)$. Assume that $\flat(X) = 0 $. Then 
    \[\flat(X)(X) = \theta(X)\cdot \theta(X) + d\eta(X,X) + \eta(X) \cdot \eta(X) = 0\]
    \[\Rightarrow \theta(X)^2 + \eta(X)^2 = 0 \quad \Rightarrow \quad \theta(X) = \eta(X) = 0,\]
    so $X$ is a section of $\Hor_{tz}$, and $\flat(X) = \iota_X d\eta = 0$. Since $d\eta$ is non-degenerate when restricted to $\Hor_{tz}$, we conclude that $X=0$ and hence $\flat$ is injective.

    $(3)\Rightarrow(1)$: Assume that $\flat_{\theta, \eta}$ is injective. To show that the pairwise intersections of $\Hor_{tz}$, $\Ver_t$, and $\Ver_z$ are trivial, note that
    \[ \Hor_{tz} \cap \Ver_t = \Hor_{tz} \cap \Ver_z = \Ver_t \cap \Ver_z  = \Hor_t \cap \Hor_z \cap \Ver. \]
    Now fix $p\in M$ and let $v \in \Hor_t \cap \Hor_z \cap \Ver \subset T_pM$. Then
    \[\flat(v) = \theta(v)\theta + \iota_v d\eta + \eta(v)\eta = 0\]
    and $v=0$ by injectivity of $\flat$. Therefore,
    \[ \Hor_{tz} \cap \Ver_t = \Hor_{tz} \cap \Ver_z = \Ver_t \cap \Ver_z  = \Hor_t \cap \Hor_z \cap \Ver = \braces{0}. \]

    Now, because $\Hor_{tz} = \ker \theta|_{\Hor_z}$ and $\Hor_z$ is itself defined as the kernel of a 1-form, $\Hor_{tz}$ must have codimension either 1 or 2 wrt $T_pM$. Also the spaces $\Ver_t$ and $\Ver_z$ are each non-trivial (by the assumption that $d\eta$ is degenerate on both $\Hor_t$ and $\Hor_z$) and have trivial intersection, hence the space $\Ver_t \oplus \Ver_z$ has dimension at least 2. Finally, it follows from the codimension lemma (lemma \ref{codimension lemma}) that $TM = \Hor_{tz} \oplus \Ver_t \oplus \Ver_z$.
\end{proof}

\begin{definition}
    If the tuple $(M,\theta, \eta)$ satisfies the assumptions above as well as any (hence all) of the conditions (1)-(3), then we call $(M,\theta, \eta)$ a \emph{cocontact manifold}.
\end{definition}

\begin{remark}
    It is necessary to include the assumption that $d\eta$ is non-degenerate on both $\Hor_t$ and $\Hor_z$ seperately, as otherwise we would be allowing situations where, for example, $\theta = 0$ and $(M,\eta)$ is a contact manifold in its own right. In the finite-dimensional case it is unnecessary because the finite dimension forces both $\Ver_t$ and $\Ver_z$ to be non-trivial.
\end{remark}

Given a cocontact manifold $(M,\theta, \eta)$ we define two vector fields, the $t$-Reeb field given by
\[ \flat_{\theta, \eta}(R_t) = \theta \]
and the $z$-Reeb field given by 
\[ \flat_{\theta, \eta}(R_z) = \eta \]
whenever they exist (there is, to the author's knowledge, no \textit{a priori} guarantee that $\theta$ and $\eta$ lie in the image of $\flat$).

Given $H\in \Cinf(M)$, the \emph{Hamiltonian vector field} for a cocontact Hamiltonian system $(M, \theta, \eta, H)$ is given by
\[ \flat_{\theta, \eta}(X_H) = dH - (R_z(H)+H)\eta + (1-R_t(H))\theta \]

For the sake of brevity the following example is left as an exercise to the reader.
\begin{example}[Damped wave equation with source term]
    Let $M =\R^2 \times W_2^2(\R) \times W_2^2(\R)$ with typical element $(t,z,q,p)$, paired with the cocontact structure
    \[\theta = dt, \quad \eta(\hat{t},\zhat, \hat{q}, \hat{p}) = \hat{z} - \int_\R p \, \hat{q} \ dx, \]
    and with Hamiltonian function
    \[H = \kappa z + \int_\R q \cdot u(t) \ dx + \int_\R \frac{1}{2}\left( \frac{\dee q}{\dee x} \right)^2 + \frac{1}{2} p^2 \, dx,\]
    for some $u(t) \in \Cinf(\R, W^2_2)$.
    
    Then the flows of $X_H$ give solutions to the damped wave equation with a source:
    \[ \frac{\dee^2 q}{\dee t^2} = \frac{\dee^2 q}{\dee x^2} - \kappa \frac{\dee q}{\dee t} + u(t).\]
\end{example}
\appendix
\section{Useful Lemmata}
\begin{definition}[Codimension]
    Let $W$ be a vector space, and $V\subseteq W$ a subspace. Then the codimension of $V$ with respect to $W$ is the dimension of the quotient $W/V$. Equivalently, if $U$ is any compliment to $V$ (so that $W=U\oplus V$), then the codimension of $V$ is the dimension of $U$. We write
    \[\codim_W V := \dim (W/V) = \dim U. \]
\end{definition}

\begin{lemma}[Codimension lemma]\label{codimension lemma}
    Let $W$ be an infinite-dimensional real vector space, and let $U,\ V$ be subspaces of $W$ such that $U\cap V = \braces{0}$. Then $\dim U \leq \codim_W V$.
    
    In particular, if $\dim U = \codim_W V$ is finite, then $W = U\oplus V$.
\end{lemma}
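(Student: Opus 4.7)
The key observation is that the hypothesis $U \cap V = \{0\}$ is exactly the condition for the quotient projection to behave well on $U$. My plan is therefore to reduce the entire statement to a linear-algebra fact about this projection.

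\textbf{Setup.} Let $\pi \colon W \to W/V$ denote the canonical quotient map and consider its restriction to $U$,
\[ \pi|_U \colon U \longrightarrow W/V. \]
This is a linear map between vector spaces, so the only nontrivial structural question is the size of its kernel and image.

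\textbf{Step 1: Injectivity gives the inequality.} I would first observe that $\ker(\pi|_U) = U \cap V = \{0\}$ by assumption, so $\pi|_U$ is injective. Consequently $\dim U = \dim \pi(U) \leq \dim(W/V) = \codim_W V$, which establishes the first claim. This step is essentially a one-liner, requiring nothing more than the definition of the codimension.

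\textbf{Step 2: Equality of finite dimensions forces surjectivity and hence the direct sum.} Now assume $\dim U = \codim_W V$ is finite. Since $\pi|_U$ is an injective linear map between finite-dimensional vector spaces of the same dimension, it is automatically an isomorphism onto $W/V$. To conclude that $W = U \oplus V$ I would check that $U + V = W$: for an arbitrary $w \in W$, surjectivity of $\pi|_U$ provides $u \in U$ with $\pi(u) = \pi(w)$, so $w - u \in V$ and $w = u + (w-u) \in U + V$. Combined with the standing hypothesis $U \cap V = \{0\}$, this gives the internal direct-sum decomposition $W = U \oplus V$.

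\textbf{Anticipated obstacles.} There are essentially none: the proof does not use that $W$ is infinite-dimensional (the hypothesis is included merely because the lemma is applied in that setting). The only place where care is required is to ensure the finiteness assumption on $\dim U = \codim_W V$ in the second part is actually used; it is, because the step ``an injective linear map between spaces of equal dimension is surjective'' fails in general without finite-dimensionality. No additional machinery is needed.
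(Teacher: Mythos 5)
Your proposal is correct and follows essentially the same route as the paper: both restrict the quotient projection $W \to W/V$ to $U$, use $U \cap V = \{0\}$ to get injectivity and hence the dimension inequality, and in the finite equal-dimension case upgrade to surjectivity to obtain $W = U + V$. No substantive difference.
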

\begin{proof}
    We have the following short exact sequence of vector spaces:
    \[
    \begin{tikzcd}
        0 \arrow[r] & V \arrow[r, "i", hook] & W \arrow[r, "q", two heads] & W/V \arrow[r] & 0
    \end{tikzcd}
    \]
    where $i$ is an inclusion, and $q$ is the projection onto the quotient spaces. Consider the restriction of $q$ to $U$; this is surjective onto its image, $U/V$, and it is injective because for $u,\ u' \in U$,
    \[ q(u) = q(u') \ \Rightarrow \ u-u' \in V\cap U \ \Rightarrow \ u=u'. \]
    So $q|_U \colon U \rightarrow U/V \subsetneq W/V$ is an isomorphism of $U$ onto a strict subspace of $W/V$. We hence conclude that 
    \[ \dim U = \dim U/V \leq \dim W/V = \codim_W V \]

    For the second part of the proof, further assume that $\dim U = \codim_W V = k \in \mathbb{N}$. Then $\dim U/V = \dim W/V$, i.e. $U/V = W/V$. By assumption, $U\cap V = \braces{0}$, so it remains to show that $W = U+V$. So pick some $w\in W$. Since $q$ is an isomorphism onto $U/V = W/V$, there exists some $u\in U$ such that $q(u) = q(w)$, i.e. such that $u+V = w+V$. Hence 
    \[u-w = v  \ \Rightarrow \  w = u-v\]
    for some $v \in V$, which completes the proof.
\end{proof}
\begin{definition}
    Given any \emph{abelian} category, a short exact sequence is a diagram of the form
    \[
    \begin{tikzcd}
        0 \arrow[r] & A \arrow[r, "i", hook] & B \arrow[r, "\lambda"] & C \arrow[r] & 0,
    \end{tikzcd}
    \]
    where $A = \ker \lambda$, $C = \im \lambda$, and $i$ is the inclusion $A \hookrightarrow B$.
    It is said to \emph{split} if there exist maps $f\colon B \rightarrow \ker \lambda$ and $g\colon \im \lambda \rightarrow B$ such that $f\circ i = \id_{\ker \lambda}$ and $\lambda \circ g  = \id_{\im \lambda}$.
\end{definition}

Note that the category of vector spaces is abelian (and in fact every short exact sequence splits), but the category of topological vector spaces is not. However, the following lemma is a much weaker statement, applying when $\lambda\colon B\rightarrow \R$.

\begin{lemma}
    Let $E$ be a Fréchet space, $\lambda\colon E \rightarrow \R$ be a continuous linear map and $F = \ker \lambda$. Then the following short exact sequence splits inside the category of topological vector spaces and continuous linear maps:
    \[
    \begin{tikzcd}
        0 \arrow[r] & F \arrow[r, "i", hook] & E \arrow[r, "\lambda"] & \R \arrow[r] & 0
    \end{tikzcd}
    \]
    and $E \cong F \oplus \R$.
\end{lemma}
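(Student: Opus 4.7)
The plan is to build, by hand, both a continuous linear section of $\lambda$ and a continuous linear retraction onto $F$, from which the splitting (in the TVS category) and the topological isomorphism $E \cong F \oplus \R$ follow immediately. Since the right-hand term in the sequence is one-dimensional, constructing the section reduces to a single choice of vector.

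First I would note that $\lambda$ must be non-zero in order for the sequence to be exact at $\R$, and pick any $e_0 \in E$ with $\lambda(e_0) = 1$. Then define
\[ s\colon \R \to E, \qquad s(t) := te_0. \]
This is linear, and continuous because scalar multiplication in a TVS is continuous, and by construction $\lambda \circ s = \id_\R$. Dually, define
\[ p\colon E \to F, \qquad p(x) := x - \lambda(x)e_0, \]
which is continuous as a difference of continuous linear maps, actually lands in $F$ since $\lambda(p(x)) = \lambda(x) - \lambda(x) = 0$, and satisfies $p \circ i = \id_F$ since $p(y) = y$ for $y \in F$. This is exactly the data of a split short exact sequence in the category of TVSs with continuous linear morphisms.

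The topological isomorphism $E \cong F \oplus \R$ is then realised by $x \mapsto (p(x), \lambda(x))$ with inverse $(y,t) \mapsto y + te_0$; both maps are continuous by continuity of the TVS operations, and a direct computation shows they are mutually inverse.

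There is no real obstacle here, and in fact the Fréchet hypothesis is entirely superfluous: the argument works in any Hausdorff TVS $E$ with any non-zero continuous linear functional $\lambda$. What is being exploited is the special feature that every continuous linear functional on a TVS admits a continuous linear section --- a phenomenon particular to codomain $\R$ (or any finite-dimensional target), which fails once the quotient becomes infinite-dimensional.
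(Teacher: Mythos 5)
Your proof is correct and follows essentially the same route as the paper: choose $e_0$ with $\lambda(e_0)=1$, take the section $t\mapsto te_0$ and the retraction $x\mapsto x-\lambda(x)e_0$, and assemble the isomorphism $x\mapsto(p(x),\lambda(x))$. The only cosmetic difference is that the paper invokes the Hahn--Banach theorem to produce the vector $e_0$, whereas you correctly observe that surjectivity of $\lambda$ (forced by exactness) already suffices, and your remark that the Fréchet hypothesis is not needed is accurate.
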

\begin{proof}
    Let $v \in E$ such that $\lambda(v) = 1$, which exists by the Hahn-Banach theorem, and let $P$ be the projection along $v$ onto $\ker f$ given by $P(x) = x - \lambda(x)v$. Then clearly $P\circ i = \id_F$. Then, consider the map $-\cdot v \colon R \rightarrow E$ which has image $\angles{v}$. Clearly $\lambda(r\cdot v) = r$ so along with the previous statement, and the fact that all maps so far are continuous by definition of topological vector space, we conclude that the above short exact sequence is split. 

    Now, to show that $E\cong F \oplus \R$, consider the maps $f\colon F\oplus \R \rightarrow E$ and $g \colon F \oplus \R \rightarrow E$ given by
    \begin{align*}
        f\colon & (x,r) \mapsto x+r\cdot v \\
        g\colon & y \mapsto (P(y), \lambda(y))
    \end{align*}
    which are both continuous and linear, and it is very easy to check that $f\circ g = \id_{F\oplus \R}$ and $g \circ f = \id_E$.
\end{proof}

\subsubsection{Direct sums of symplectic vector spaces}

\begin{lemma}
    $(V\oplus W)' \cong V' \oplus W'$
\end{lemma}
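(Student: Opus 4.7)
The plan is to construct explicit continuous linear maps in both directions and verify they are mutually inverse. First I would fix the standard structure maps: the inclusions $\iota_V\colon v \mapsto (v,0)$ and $\iota_W\colon w \mapsto (0,w)$, together with the projections $\pi_V, \pi_W$, all of which are continuous and linear because $V\oplus W$ carries the product topology.

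Next I would define a map $\Psi\colon V' \oplus W' \rightarrow (V\oplus W)'$ by
\[\Psi(\varphi,\psi)(v,w) = \varphi(v) + \psi(w),\]
whose output is continuous and linear because it equals $\varphi\circ\pi_V + \psi\circ\pi_W$, a sum of compositions of continuous linear maps. In the opposite direction I would define $\Theta\colon (V\oplus W)' \rightarrow V'\oplus W'$ by
\[\Theta(\Phi) = \left(\Phi\circ \iota_V,\, \Phi\circ \iota_W\right),\]
which outputs a pair of continuous linear functionals on $V$ and $W$ respectively.

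The main step is then checking that $\Psi$ and $\Theta$ are mutually inverse, which is a direct calculation. In one direction, using $(v,w) = (v,0) + (0,w)$ and linearity of $\Phi$,
\[\Psi(\Theta(\Phi))(v,w) = \Phi(v,0) + \Phi(0,w) = \Phi(v,w).\]
In the other, for any $(v,w)$,
\[\Psi(\varphi,\psi)\circ \iota_V\,(v) = \varphi(v), \qquad \Psi(\varphi,\psi)\circ \iota_W\,(w) = \psi(w),\]
so $\Theta(\Psi(\varphi,\psi)) = (\varphi,\psi)$. Both $\Psi$ and $\Theta$ are themselves linear by construction.

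There is no real obstacle here beyond unpacking the definitions. Conceptually the lemma is the universal property of the biproduct $V\oplus W$ applied to morphisms into $\R$: since finite products and coproducts coincide in the category of TVS's with continuous linear maps, the $\mathrm{Hom}$ functor sends $V\oplus W$ to the product $V' \times W' = V' \oplus W'$. If one wishes to upgrade the isomorphism to a topological one, the same $\Psi$ and $\Theta$ are continuous with respect to any of the standard dual topologies (weak-$*$, strong, or bounded-open), since they are given pointwise by composition with fixed continuous linear maps.
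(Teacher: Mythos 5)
Your proof is correct and follows essentially the same route as the paper: both construct the map $(\varphi,\psi)\mapsto \varphi\circ\pi_V + \psi\circ\pi_W$ and its inverse $\Phi\mapsto(\Phi\circ\iota_V,\Phi\circ\iota_W)$ and verify they are mutually inverse continuous linear maps. Your additional remarks on the biproduct universal property and the dual topologies are a welcome clarification but do not change the argument.
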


\begin{proof}
    Let $f: V\rightarrow \R$ and $g\colon W \rightarrow \R$ be toplinear maps, and consider the map $F \colon (f,g) \mapsto \varphi_{(f,g)}$ where
    \[\varphi_{(f,g)}(v,w) = f(v) + g(w).\]
    Clearly, $\varphi_{(f,g)}\colon V\oplus W\rightarrow \R $ is linear and continuous, so $F\colon V'\oplus W' \rightarrow (V\oplus W)'$.

    Now let $\varphi \in (V\oplus W)'$ and consider $G\colon \varphi \mapsto (\varphi\circ i_V, \varphi\circ i_W)$, where $i_V\colon V\rightarrow \oplus W$, $i_W\colon W\rightarrow V\oplus W $ are the obvious inclusions. Since the inclusions and $\varphi$ are continuous, $G\colon (V\oplus W)'\rightarrow V'\oplus W'$.

    Finally, since $F\circ G= \id_{(V\oplus W)'}$ and $G\circ F = \id_{V'\oplus W'}$, we have $(V\oplus W)' \cong V' \oplus W'$.
\end{proof}

\begin{lemma}\label{lem: weakly symplectic direct sums}
    Let $(V, \Omega_1)$ and $(W, \Omega_2)$ be weakly symplectic vector spaces. Define $\Omega \in \bigwedge^2 (V\oplus W)'$ by \[\Omega(v+w, v'+w') = \Omega_1(v, v') + \Omega_2(w,w')\] for $v,v' \in V$, $w,w' \in W$. Then $(V\oplus W, \Omega)$ is weakly symplectic.
\end{lemma}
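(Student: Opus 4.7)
The plan is to verify in turn each property that makes $\Omega$ into a weakly symplectic form on $V\oplus W$: continuity, bilinearity, antisymmetry, and weak non-degeneracy. The first three are essentially bookkeeping and follow immediately from the corresponding properties of $\Omega_1$ and $\Omega_2$ together with the universal property of the direct sum, so I would dispatch them briefly. In particular, writing $\pi_V, \pi_W$ for the canonical projections from $V\oplus W$, one has $\Omega = \Omega_1\circ(\pi_V\times \pi_V) + \Omega_2\circ(\pi_W\times\pi_W)$, which is manifestly continuous and bilinear; antisymmetry is inherited termwise.

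The real content is weak non-degeneracy. I would argue as follows: suppose $v+w\in V\oplus W$ satisfies $\Omega(v+w,\, v'+w')=0$ for every $v'\in V$ and $w'\in W$. Specialising to $w'=0$ gives
\[ \Omega_1(v,v') + \Omega_2(w,0) = \Omega_1(v,v') = 0 \qquad \text{for all } v'\in V, \]
so weak non-degeneracy of $\Omega_1$ forces $v=0$. Specialising instead to $v'=0$ forces $w=0$ by weak non-degeneracy of $\Omega_2$, whence $v+w=0$. Hence $\flat_\Omega$ has trivial kernel.

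The only mild subtlety worth flagging is that the $\Omega_i$ are assumed merely weakly (not strongly) non-degenerate, so I must be careful to argue only about the kernel of $\flat_\Omega$ and not attempt to exhibit an inverse. No obstacle arises here because the argument above uses only the kernel condition. The statement about the dual space splitting $(V\oplus W)'\cong V'\oplus W'$ from the previous lemma is not strictly needed for the proof, though it does make transparent why the construction is natural: $\flat_\Omega = \flat_{\Omega_1}\oplus \flat_{\Omega_2}$ under this identification, and the direct sum of two injections is injective.
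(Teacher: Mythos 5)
Your proposal is correct and follows essentially the same route as the paper: both reduce to showing $\ker\flat_\Omega = \{0\}$ and do so by testing against vectors supported in a single summand (your specialisation to $w'=0$ and $v'=0$ is just a slightly more direct phrasing of the paper's observation that $-\Omega_1(v_1,v_2)=\Omega_2(w_1,w_2)$ for all independent choices forces both sides to vanish). Your additional remarks on continuity, bilinearity, and antisymmetry are harmless extra care that the paper leaves implicit.
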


\begin{proof}
    Let $v_1 + w_1 \in V\oplus W$ be such that for all $v_2+ w_2 \in V \oplus W$, $\Omega(v_1+w_1, v_2 + w_2)=0$. Then,
    \begin{align*}
        &\forall v_2 \in V, \ \forall w_2 \in W,& \ 0 &= \Omega(v_1+w_1, v_2 + w_2) \\
        &&&= \Omega_1(v_1, v_2) + \Omega_2(w_1, w_2) \\
        &\Rightarrow \forall v_2 \in V, \ \forall w_2 \in W, & -\Omega_1(v_1,v_2) &= \Omega(w_1, w_2) 
    \end{align*}
    which, by weak non-degeneracy of $\Omega_1$ and $\Omega_2$, implies that for all $v_2$, $\Omega(v_1, v_2)=0$ and hence that $v_1 = 0$. Similarly we conclude that $w_1=0$, and so $v_1+w_1 = 0$ and that $\Omega$ is weakly non-degenerate.
\end{proof}

\begin{lemma}\label{lem: strongly symplectic direct sums}
    Let $(V,\Omega_1)$, $(W,\Omega_2)$ be strongly symplectic. Then $(V\oplus W, \Omega)$ as defined above is strongly symplectic.
\end{lemma}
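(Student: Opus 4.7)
The plan is to leverage the two preceding lemmas so that the proof reduces to a direct check that a direct sum of toplinear isomorphisms is itself a toplinear isomorphism. First, by Lemma~\ref{lem: weakly symplectic direct sums}, $\Omega$ is already antisymmetric, bilinear, continuous, and weakly non-degenerate, so the only remaining content is strong non-degeneracy, i.e.\ showing that $\flat_\Omega\colon V\oplus W \to (V\oplus W)'$ is a toplinear isomorphism rather than merely an injection.

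Next, I would unwind the definition of $\flat_\Omega$ along the direct sum decomposition. For any $v+w,\ v'+w' \in V\oplus W$,
\[ \flat_\Omega(v+w)(v'+w') = \Omega_1(v,v') + \Omega_2(w,w') = \flat_{\Omega_1}(v)(v') + \flat_{\Omega_2}(w)(w'). \]
Combined with the explicit isomorphism $F\colon V'\oplus W' \to (V\oplus W)'$, $(f,g)\mapsto \bigl((v,w)\mapsto f(v)+g(w)\bigr)$, constructed in the dual-splitting lemma immediately above, this identity exhibits $\flat_\Omega$ as the composition $F \circ (\flat_{\Omega_1} \oplus \flat_{\Omega_2})$.

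To conclude, since both $\flat_{\Omega_1}$ and $\flat_{\Omega_2}$ are toplinear isomorphisms by the strong non-degeneracy hypothesis, their direct sum $\flat_{\Omega_1}\oplus \flat_{\Omega_2}\colon V\oplus W \to V'\oplus W'$ is a toplinear isomorphism (continuity and continuity of the inverse follow componentwise from the universal property of the product topology on a finite direct sum). Post-composing with the toplinear isomorphism $F$ yields that $\flat_\Omega$ is itself a toplinear isomorphism, so $(V\oplus W, \Omega)$ is strongly symplectic.

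The only step that requires any real thought is the last one, namely confirming that in the category of topological vector spaces a finite direct sum of toplinear isomorphisms is toplinear with toplinear inverse; this is where one must be slightly careful, since open-mapping-type arguments are unavailable in general lcTVSs, but here no such tool is needed because the inverse is manufactured explicitly as $\flat_{\Omega_1}^{-1}\oplus \flat_{\Omega_2}^{-1}$ and continuity is inherited from the components.
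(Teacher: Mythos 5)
Your proof is correct and is essentially the paper's own argument: the paper likewise identifies $\flat_\Omega$ with $\flat_{\Omega_1}\oplus\flat_{\Omega_2}$ under the isomorphism $(V\oplus W)'\cong V'\oplus W'$ and writes down the explicit inverse $\alpha\mapsto \tilde{\Omega}_1^{-1}(\alpha|_V)+\tilde{\Omega}_2^{-1}(\alpha|_W)$, which is exactly your $(\flat_{\Omega_1}^{-1}\oplus\flat_{\Omega_2}^{-1})\circ F^{-1}$. Your version merely makes the factorisation through $F$ and the componentwise continuity of the inverse more explicit.
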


\begin{proof}
    Define $\tilde{\Omega}_1\colon v \mapsto \Omega_1(v,-)$ and similarly define $\tilde{\Omega}_2$, as maps from $V, W$ resp. into their continuous duals $V', W'$. Finally define $\tilde{\Omega}\colon (V\oplus W) \rightarrow (V \oplus W)' \cong V'\oplus W'$,
    \[\tilde{\Omega}\colon v+w \mapsto \Omega(v+w,-)\cong \Omega_1(v,-) + \Omega_2(w,-).\]

    $\Omega_1$ and $\Omega_2$ are strongly non-degenerate so the maps $\tilde{\Omega}_1$ and $\tilde{\Omega}_2$ have inverses $\tilde{\Omega}_1^{-1}\colon V' \rightarrow V$ and $\tilde{\Omega}_2^{-1}\colon W' \rightarrow W$.

    We can check with a few lines of calculation that $\tilde{\Omega}$ has inverse
    \[\tilde{\Omega}^{-1}\colon (V\oplus W)' \cong V'\oplus W' \rightarrow V\oplus W\]
    given by 
    \[\tilde{\Omega}^{-1}(\alpha) = \tilde{\Omega}_1^{-1}(\alpha|_V)+\tilde{\Omega}_2^{-1}(\alpha|_W).\]
    Simply check that $\tilde{\Omega}\circ\tilde{\Omega}^{-1} = \id_{(V\oplus W)}$ and $\tilde{\Omega}^{-1}\circ\tilde{\Omega} = \id_{(V\oplus W)'}$.
\end{proof}
\printbibliography

\end{document}